\theoremstyle{plain}
\newtheorem{theorem}{Theorem}[section]
\newtheorem{proposition}[theorem]{Proposition}
\newtheorem{lemma}[theorem]{Lemma}
\newtheorem{corollary}[theorem]{Corollary}
\newtheorem{remark}[theorem]{Remark}
\numberwithin{equation}{section}
\newcommand{\bP}{\mathbb{Z}_+}
\newcommand{\sym}{\ensuremath{\operatorname{Sym}}}
\newcommand{\qsym}{\ensuremath{\operatorname{QSym}}}
\newcommand{\qs}{{\mathcal{S}}}		
\newcommand{\ncsa}{\mathbf{s}_{\alpha}}
\newcommand{\ncsb}{\mathbf{s}_{\beta}}
\newcommand{\ncsg}{\mathbf{s}_{\gamma}}
\newcommand{\Lc}{\mathcal{L}_{c}} 
\newcommand\partitionof[1]{\widetilde{#1}}
\newcommand\reverse[1]{{#1}^r}
\newcommand{\set}{{\sf set}} 
\newcommand{\cont}[1]{{\sf cont}(#1)} 
\newcommand{\rect}[1]{{\sf rect}(#1)} 
\newcommand{\colform}[1]{{\sf colform}(#1)} 
\newcommand{\frank}[1]{\mathrm{Frank}(#1)} 
\newcommand{\lrfrank}[1]{\mathrm{LRFrank}(#1)} 
\newcommand{\lrfranktab}[1]{\mathrm{LRFrankTab}(#1)} 
\newcommand{\rot}[1]{{\sf rotate}(#1)} 
\newcommand{\addt}{\mathfrak{t}} 
\newcommand{\suchthat}{\;|\;}
\newcommand{\cskew}{{/\!\!/}}
\newcommand{\YT}{\ensuremath{\operatorname{YT}}}
\newcommand{\LRT}{\ensuremath{\operatorname{LRT}}} 
\newcommand{\SCT}{\ensuremath{\operatorname{SCT}}}
\newcommand{\CT}{\ensuremath{\operatorname{CT}}}
\newlength\cellsize \setlength\cellsize{15\unitlength}
\newcommand\cellify[1]{\def\thearg{#1}\def\nothing{}%
\ifx\thearg\nothing
\vrule width0pt height\cellsize depth0pt\else
\hbox to 0pt{\usebox2\hss}\fi%
\vbox to 15\unitlength{
\vss
\hbox to 15\unitlength{\hss$#1$\hss}
\vss}}
\newcommand\tableau[1]{\vtop{\let\\=\cr
\setlength\baselineskip{-16000pt}
\setlength\lineskiplimit{16000pt}
\setlength\lineskip{0pt}
\halign{&\cellify{##}\cr#1\crcr}}}
\newcommand\expath[1]{%
\hbox to 0pt{\usebox3\hss}%
\vbox to 15\unitlength{
\vss
\hbox to 15\unitlength{\hss$#1$\hss}
\vss}}
\newcommand\bas[1]{\omit \vbox to \cellsize{ \vss \hbox to \cellsize{\hss$#1$\hss} \vss}}
\newcommand{\ptab}{{\sf P}} 
\newcommand{\qtab}{{\sf Q}} 
\newcommand{\sh}{{\sf shape}} 
\newcommand{\nsym}{\mathrm{NSym}}
\definecolor{darkbrown}{rgb}{0.0, 0.0, 1.0}
\newcommand{\bemph}[1]{\textcolor{darkbrown}{\emph{#1}}} 
\newcommand{\mbP}{\bP^{*}} 
\newcommand{\sgrp}[1]{\mathfrak{S}_{#1}} 
\newcommand{\inv}[1]{{\sf I}(#1)}
\newcommand{\crw}[1]{{\sf crw}(#1)} 
\newcommand{\stan}[1]{{\sf stan}(#1)} 
\newcommand{\pskew}{/} 
\newcommand{\cgw}[1]{{\sf cgw}(#1)} 
\newcommand{\sort}{{\sf sort}}
\newcommand{\evac}{{\sf evac}}
\begin{document}

\title[]{Noncommutative LR coefficients and crystal reflection operators}
\author{E. Richmond}
\address{Department of Mathematics, Oklahoma State University, Stillwater, OK 74708, USA}
\email{\href{mailto:edward.richmond@okstate.edu}{edward.richmond@okstate.edu}}
\author{V. Tewari}
\address{Department of Mathematics, University of Pennsylvania, Philadelphia, PA 19104, USA}
\email{\href{mailto:vvtewari@upenn.edu}{vvtewari@upenn.edu}}
\subjclass[2010]{Primary 05E05; Secondary 05A05, 05E10, 20C30}
\keywords{crystal operator, Littlewood-Richardson coefficient, noncommutative symmetric function, symmetric function, Schur function}

\begin{abstract}
We relate noncommutative Littlewood-Richardson coefficients of Bessenrodt-Luoto-van Willigenburg to classical Littlewood-Richardson coefficients via crystal reflection operators. 
A key role is played by the combinatorics of frank words.
\end{abstract}

\maketitle

\section{Introduction}
Quasisymmetric Schur functions, introduced by Haglund-Luoto-Mason-van Willigenburg \cite{HLMvW-QS}, form a prominent basis for the Hopf algebra of quasisymmetric functions denoted by $\qsym$.
The quasisymmetric Schur function indexed by the composition $\alpha$, denoted by $\qs_{\alpha}$, is obtained by  summing monomials attached to semistandard composition tableaux of shape $\alpha$.
This is reminiscent of the definition of Schur functions as sums of monomials corresponding to semistandard Young tableaux.
As the name suggests, quasisymmetric Schur functions share many properties with classical Schur functions, and Mason's map $\rho$ defined in \cite{mason-1} connects the combinatorics of composition tableaux to that of Young tableaux.
Understanding analogues of Schur functions and their generalizations has long been a theme in algebraic combinatorics; see \cite{AllenHallamMason,Per,Assaf-Searles-Kohnert,assafsearles,BBSSZ,Monical-Pechenik-Searles, Pechenik-Searles,searles} for recent work in this context.

The Hopf algebra of noncommutative symmetric functions, $\nsym$, introduced in the seminal paper \cite{GKLLRT}, is Hopf-dual to $\qsym$ as shown by Malvenuto-Reutenauer \cite{malvenuto-reutenauer}.
Bessenrodt-Luoto-van Willigenburg \cite{BLvW} studied the dual basis elements $\ncsa$ corresponding to quasisymmetric Schur functions. The resulting functions are also indexed by compositions and are called noncommutative Schur functions. The inclusion of the Hopf algebra of symmetric functions $\sym$ into $\qsym$ induces a projection $\chi:\nsym\twoheadrightarrow \sym$. This projection maps noncommutative Schur functions to classical Schur functions, and justifies the name of the former.
The structure constants $C_{\alpha\beta}^{\gamma}$ that arise in
\begin{align}\label{eqn:noncommutative lr coefficients introduction}
\ncsa\cdot\ncsb=\sum_{\gamma}C_{\alpha\beta}^{\gamma}\, \ncsg
\end{align}
 are called noncommutative Littlewood-Richardson (LR) coefficients.
 Noncommutative LR coefficients turn out to be nonnegative integers and furthermore, they refine the classical LR coefficients $c_{\mu\nu}^{\lambda}$ that arise in the product of Schur functions
 \begin{align}\label{eqn:classical lr coefficients introduction}
 s_{\nu}\cdot s_{\mu}=\sum_{\lambda}c_{\nu\mu}^{\lambda}\, s_{\lambda}.
 \end{align}
More precisely, suppose $\nu$ and $\mu$ are partitions that rearrange to compositions $\alpha$ and $\beta$ respectively.
Applying $\chi$ to both sides in \eqref{eqn:noncommutative lr coefficients introduction} and comparing the result with  \eqref{eqn:classical lr coefficients introduction} implies 
\begin{align}\label{eqn: nc lr refines lr introduction}
	c_{\nu\mu}^{\lambda}=\sum_{\gamma}C_{\alpha\beta}^{\gamma}
\end{align}
where the sum on the right runs over all compositions $\gamma$ that rearrange to a fixed partition $\lambda$.
Among the numerous combinatorial interpretations of $c_{\nu\mu}^{\lambda}$, the one we focus on states that $c_{\nu\mu}^{\lambda}$ counts the number of LR tableaux of shape $\lambda/\mu$ and content $\nu$.
Our primary goal in this article is to understand the summands on the right hand side in
\eqref{eqn: nc lr refines lr introduction} in terms of LR tableaux. To this end, crystal reflection operators are key.




To state our result, we introduce the necessary notation briefly. The reader is referred to Section~\ref{sec:background} for details.
Given a composition $\alpha$, we denote the partition underlying $\alpha$ by $\sort(\alpha)$.
Let $\LRT(\lambda,\mu,\nu)$ be the set of LR tableaux of shape $\lambda/\mu$ and content $\nu$.
Given a permutation $\sigma$, let $\LRT^{\sigma}(\lambda,\mu,\nu)$ be the set of tableaux obtained by applying the crystal reflection operators corresponding to a reduced word of $\sigma$. Let $\alpha\coloneqq \sigma\cdot \nu$, and let $\beta$ be any composition that satisfies $\sort(\beta)=\mu$.
On applying the map $\rho_{\beta}^{-1}$  (which sends Young tableaux to composition tableaux, \cite[Chapter 4]{QSbook}) to elements in $\LRT^{\sigma}(\lambda,\mu,\nu)$, we obtain the disjoint decomposition
\begin{align}\label{eqn:skew LR disjoint}
\LRT^{\sigma}(\lambda,\mu,\nu)=\coprod_{\gamma}X_{\alpha\beta}^{\gamma},
\end{align}
where $X_{\alpha\beta}^{\gamma}$ consists of all tableaux $T\in \LRT^{\sigma}(\lambda,\mu,\nu)$ whose outer shape is given by the composition $\gamma$ under the map $\rho_{\beta}^{-1}$. Under this setup, our main theorem states the following.
\begin{theorem}\label{thm:intro_main_1}
The noncommutative LR coefficient $C_{\alpha\beta}^{\gamma}$ equals the  cardinality of $X_{\alpha\beta}^{\gamma}$.
\end{theorem}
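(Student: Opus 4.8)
The plan is to recast $\LRT^{\sigma}(\lambda,\mu,\nu)$ as a single rectification fibre inside the Kashiwara crystal on skew semistandard Young tableaux, then match it against the composition‑tableau count $C_{\alpha\beta}^{\gamma}$ via the bijection $\rho_{\beta}$; the combinatorics of frank words is what makes the last comparison work.

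First I would observe that a skew semistandard Young tableau of shape $\lambda/\mu$ is a Littlewood--Richardson tableau exactly when it is a highest weight element of the crystal on tableaux of that shape, equivalently when its jeu de taquin rectification is the Yamanouchi tableau $Y_{\nu}$ of its content $\nu$. Since rectification is an isomorphism of crystals $B_{\lambda/\mu}\xrightarrow{\sim}\bigoplus_{\kappa}B_{\kappa}^{\oplus c_{\mu\kappa}^{\lambda}}$, it intertwines every Kashiwara operator, hence every crystal reflection operator; together with the fact that the reflection operators satisfy the braid relations (so that applying those attached to a reduced word of $\sigma$ is unambiguous), this shows that $\sigma\cdot T$ is well defined for $T\in\LRT(\lambda,\mu,\nu)$ and that $\rect{\sigma\cdot T}=\sigma\cdot Y_{\nu}$. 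Applying $\sigma^{-1}$ gives the converse, so
\[
\LRT^{\sigma}(\lambda,\mu,\nu)=\{\,S : S \text{ is a semistandard Young tableau of shape }\lambda/\mu\text{ with }\rect{S}=\sigma\cdot Y_{\nu}\,\},
\]
a single fibre of the rectification map; all of its members have content $\alpha=\sigma\cdot\nu$, and the fibre has size $c_{\nu\mu}^{\lambda}$. Consequently $X_{\alpha\beta}^{\gamma}$ is the set of semistandard Young tableaux of shape $\lambda/\mu$ that rectify to $\sigma\cdot Y_{\nu}$ and whose outer composition shape, as recorded by $\rho_{\beta}^{-1}$, equals $\gamma$, and the theorem becomes the assertion that this set has cardinality $C_{\alpha\beta}^{\gamma}$.

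To identify this cardinality I would turn to the composition‑tableau side. The Littlewood--Richardson rule of \cite{BLvW} expresses $C_{\alpha\beta}^{\gamma}$ as a count of Littlewood--Richardson composition tableaux of shape $\gamma\cskew\beta$, governed by a lattice‑type condition on the column reading word of such a tableau together with the requirement that this reading word be a frank word attached to $\alpha$; one may also phrase this through the identity $\qs_{\gamma\cskew\beta}=\sum_{\alpha}C_{\alpha\beta}^{\gamma}\qs_{\alpha}$ and the tableau model of $\qs_{\gamma\cskew\beta}$. On the other hand $\rho_{\beta}$ is a weight‑preserving bijection from composition tableaux of shape $\gamma\cskew\beta$ onto semistandard Young tableaux of shape $\lambda/\mu$ with outer shape $\gamma$. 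The key structural statement to establish is that $\rho_{\beta}$ carries column reading words, and hence the frank‑word data $\frank{\cdot}$ and $\lrfrank{\cdot}$, to their Young‑tableau counterparts in a manner compatible with the crystal reflection operators on the two sides. Granting that, the composition tableaux counted by $C_{\alpha\beta}^{\gamma}$ are precisely those whose image under $\rho_{\beta}$ rectifies to $\sigma\cdot Y_{\nu}$, i.e.\ precisely the $\rho_{\beta}^{-1}$‑images of the elements of $X_{\alpha\beta}^{\gamma}$, which yields $C_{\alpha\beta}^{\gamma}=|X_{\alpha\beta}^{\gamma}|$. Equivalently, one shows $\qs_{\gamma\cskew\beta}=\sum_{\alpha}|X_{\alpha\beta}^{\gamma}|\,\qs_{\alpha}$ and invokes the linear independence of $\{\qs_{\alpha}\}$.

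The crystal reduction of the first step is routine; the crux is the frank‑word bookkeeping. The difficulty is that $\rho_{\beta}$ is \emph{not} a morphism of crystals in the naive sense, so one cannot simply quote functoriality: one must track, column by column, how $\rho_{\beta}$ and the crystal reflection operators act on reading words, and verify that under $\rho_{\beta}$ the $\LRCT$ condition turns into ``rectifies to $\sigma\cdot Y_{\nu}$'' with $\sigma\cdot\nu=\alpha$. A natural way to organize this is to first settle the case $\sigma=e$ (which is essentially Mason's bijection together with the classical count of Littlewood--Richardson composition tableaux of partition content) and then show that the reflection operators act the same way on frank words of composition tableaux as on frank words of skew Young tableaux; establishing this parallelism between the two frank‑word actions is the main obstacle.
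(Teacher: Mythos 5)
Your architecture is reasonable, and your first step is in one respect cleaner than the paper's: observing that jeu-de-taquin rectification is a crystal isomorphism, hence intertwines the reflection operators, immediately gives $\rect{\sigma(T)}=\sigma(Y_{\nu})$ and identifies $\LRT^{\sigma}(\lambda,\mu,\nu)$ with a single rectification fibre. The paper never argues this way; it instead computes $\rect{\stan{T}}$ through the column growth word, the identity $\rect{\stan{T}}=\evac(\qtab(\cgw{T}))^{t}$, and the frank-word factorization (Lemma~\ref{lem:commuting action}, Proposition~\ref{prop:sigma-LR and frank}).

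However, the proposal stops exactly where the substance lies, and the gap is genuine rather than routine. To match your fibre against the BLvW count of SCTs of shape $\gamma\cskew\beta$ rectifying to $\tau_{\alpha}$, you must identify the rectification targets on the two sides: since $\rho_{\beta}$ preserves column reading words, what is needed is that $\rho^{-1}(\rect{\stan{T}})=\tau_{\alpha}$, equivalently that $\stan{\sigma(Y_{\nu})}$ is the unique standard tableau of shape $\nu$ with descent composition $\alpha=\sigma\cdot\nu$. You explicitly flag this as ``the main obstacle'' and offer only a plan (``track, column by column\dots''); but this is precisely the content of Proposition~\ref{prop:crystal action permutes parts} together with Lemma~\ref{lem:unique tableau  with shape=descent composition}, which the paper proves via the $\iota$-involution on two-column frank words and a descent-set/evacuation computation. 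Without it the proof is incomplete. A second unaddressed point: your plan demands a genuine bijection, hence surjectivity --- every SCT of shape $\gamma\cskew\beta$ rectifying to $\tau_{\alpha}$ must arise as $\rho_{\beta}^{-1}(\stan{T})$ for some $T\in\LRT^{\sigma}(\lambda,\mu,\nu)$ --- which requires a standardization/destandardization compatibility between the standard and semistandard fibres that you do not discuss. The paper sidesteps surjectivity entirely: it establishes only the inequality $|X_{\gamma}|\leq C_{\alpha\beta}^{\gamma}$ and then forces equality by summing over all $\gamma$ with $\sort(\gamma)=\lambda$ against $c_{\nu\mu}^{\lambda}=\sum_{\delta}C_{\alpha\beta}^{\delta}=|\LRT^{\sigma}(\lambda,\mu,\nu)|$. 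Borrowing that counting argument would let you get away with only the easy inclusion; as written, your argument needs both directions and fully supplies neither.
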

The upshot of Theorem~\ref{thm:intro_main_1} is that starting from  $\LRT(\lambda,\mu,\nu)$, we can compute \emph{all} noncommutative LR coefficients $C_{\alpha\beta}^{\gamma}$, where $\alpha$ dictates the choice of crystal reflections to be performed and $\beta$ determines the generalized $\rho$ map to be applied.
We also interpret Theorem~\ref{thm:intro_main_1} in terms of chains in Young's lattice indexed by certain frank words and obtain a rule for $C_{\alpha\beta}^{\gamma}$ involving box-adding operators on compositions.
In the case where $\alpha$ is either a partition or reverse partition, our interpretations yield the two LR rules in \cite{BTvW}.


\medskip

\textbf{Outline of the article: } Section~\ref{sec:background} sets up all the necessary combinatorial background.
Section~\ref{sec:main_result} describes our central result along with examples. In Section~\ref{subsec:symmetric group action frank words}, we introduce frank words and describe the Lascoux-Sch\"utzenberger symmetric group action on frank words.
Section~\ref{subsec:skew shape compatibility} identifies LR tableaux  to certain distinguished frank words, called compatible words, drawing upon work by Remmel-Shimozono \cite{Remmel-Shimozono}.
Section~\ref{subsec:sym_actions_tableaux_words} relates crystal reflections acting on LR tableaux to the symmetric group action on compatible words.
Section~\ref{sec:back to CT} relates the results in Section~\ref{sec:Frank words} back to noncommutative LR coefficients. The key result in this section, which implies Theorem~\ref{thm:intro_main_1}, is Proposition~\ref{prop:crystal action permutes parts}. We conclude our article with Corollary~\ref{cor:box-adding and LR coefficients}, which reinterprets our combinatorial interpretation for noncommutative LR coefficients in terms of box-adding operators on compositions.

\section{Background}\label{sec:background}
To keep our background section brief, we assume knowledge of combinatorial structures and algorithms arising in the well-studied theory of  symmetric functions such as partitions, skew shapes, Young tableaux, Robinson-Schensted-Knuth insertion and jeu-de-taquin.
The reader is referred to the  standard texts \cite{fulton,macdonald,sagan,stanley-ec2} for further information.
Regarding notions relevant to the theory of quasisymmetric Schur functions, we adhere to the notation and conventions in \cite{QSbook}.
We emphasize here that what we call quasisymmetric Schur functions in this article are the Young quasisymmetric Schur functions in \cite{QSbook}.
We have made this choice so that the tableau objects that we consider in the quasisymmetric/noncommutative setting align with the more prevalent notion of Young tableaux rather than reverse tableaux.

\subsection{Words}
We denote the set of positive integers by $\bP$. Given $n\in \bP$, we define $[n]:=\{1,\dots,n\}$. Furthermore, we set $[0]=\emptyset$.
Let $\mbP$ denote the set of all words in the alphabet $\bP$.
Consider $w=w_1\dots w_n\in \mbP$.
We call $n$ the \bemph{length} of $w$ and denoted it by $|w|$.
The word $w_n\dots w_1$ is denoted by $\reverse{w}$.
We say that $w$ is \bemph{lattice} if every prefix of $w$ contains at least as many $i$'s as $i+1$'s  for all $i\in \bP$.
We say that $w$ is \bemph{reverse-lattice} if $\reverse{w}$ is lattice.
We call $w$ a \bemph{column word}  if $w_1>\cdots > w_n$.
An ordered pair $(i,j)$ is said to be an \bemph{inversion} of $w$ if $w_i>w_j$ and $1\leq i<j\leq n$.
We denote the set of inversions of $w$ by $\inv{w}$.
Note that there is a unique permutation $\stan{w}\in  \sgrp{n}$ such that $\inv{\stan{w}}=\inv{w}$.
We call $\stan{w}$ the \bemph{standardization} of $w$.
We denote the longest word in $\sgrp{n}$ by $w_0^{(n)}$, or simply by $w_0$ if $n$ is clear from context.
We let $\sigma\in\sgrp{n}$ act on a sequence $\lambda=(\lambda_1,\dots,\lambda_n)$ by $\sigma\cdot \lambda=(\lambda_{\sigma^{-1}(1)},\dots, \lambda_{\sigma^{-1}(n)})$.
Note that this is a right action.

\subsection{Compositions and partitions}
A finite list of nonnegative integers $\alpha = (\alpha _1, \ldots , \alpha _\ell)$ is called a \bemph{weak composition}.
If $\alpha_i>0$ for all $1\leq i\leq\ell$, then $\alpha$ is called a \bemph{composition}. If, in addition, we have $\alpha _1\geq \cdots \geq\alpha _\ell >0$, then $\alpha$ is called a \bemph{partition}.
Given $\alpha=(\alpha_1, \ldots , \alpha_\ell)$ we call the $\alpha _i$ the \bemph{parts} of $\alpha$ and the sum of the $\alpha_i$, denoted by $|\alpha|$, is called the \bemph{size} of $\alpha$.
We denote the number of parts of $\alpha$ by $\ell(\alpha$) and call it the \bemph{length} of $\alpha$.
The unique composition of length and size zero is denoted by $\varnothing$.
The partition obtained by sorting the parts of a composition $\alpha$ in weakly decreasing order is denoted by $\sort(\alpha)$.
We denote the composition $(\alpha_{\ell},\dots, \alpha_1)$ by $\reverse{\alpha}$.
The \bemph{composition diagram} of $\alpha=(\alpha_1,\ldots,\alpha_{\ell})$ is the left-justified array of boxes with $\alpha_i$ boxes in row $i$ from the bottom.
If $\alpha$ is a partition, the composition diagram of $\alpha$ coincides with the Young diagram of $\alpha$ in the French convention.
See Figure~\ref{fig:comp_diagram} for the composition diagram of $(2,1,3)$.
Recall that compositions of  $n$ are in bijection with subsets of $[n-1]$ via the map that sends $\alpha=(\alpha_1,\ldots,\alpha_{\ell})$ to  $\set(\alpha)\coloneqq \{\alpha_1,\alpha_1+\alpha_2,\ldots,\alpha_1+\cdots+\alpha_{\ell-1}\}$.
Finally, denote the \emph{transpose}  of a partition $\lambda$ by $\lambda^t$.

\begin{figure}[ht]
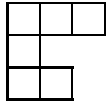

  \ytableausetup{mathmode,boxsize=1em}
  \ydiagram{3,1,2}
  \caption{The composition diagram of $(2,1,3)$.}
  \label{fig:comp_diagram}
\end{figure}

\subsection{Young tableaux} 
A  \bemph{Young tableau} $T$ of \bemph{skew shape} $\lambda/\mu$ is a filling of the boxes of
$\lambda/ \mu$ with positive integers so that the entries along the rows increase weakly read from left to right and entries along the columns increase strictly read from bottom to top.
If the entries in $T$ are all distinct and belong to $[|\lambda/\mu|]$, then we say that $T$ is \bemph{standard}.
We denote the  set of Young tableaux of shape $\lambda/\mu$ by $\YT(\lambda/\mu)$.

Given a word $w=w_1\dots w_n$, the Robinson-Schensted correspondence (via row insertion or column insertion) associates an ordered pair of Young tableaux $(\ptab(w), \qtab(w))$  of the same shape.
We call $\ptab(w)$ (respectively $\qtab(w)$) the \bemph{insertion tableau} (respectively \bemph{recording tableau}).
We {call} two words $w_1$ and $w_2$  \bemph{Knuth-equivalent} if $\ptab(w_1)=\ptab(w_2)$.
The \bemph{column reading word} of $T\in \YT(\lambda/\mu)$, denoted by $\crw{T}$, is obtained by reading the entries {of $T$} in every column from top to bottom starting from the left and going to the right.
We declare tableaux $T_1$ and $T_2$ to be \bemph{jdt-equivalent} if their column reading words are Knuth-equivalent, that is, $\ptab(\crw{T_1})=\ptab(\crw{T_2})$.

If $T$ is a Young tableau with maximal entry  $m$, then the \bemph{content} of  $T$, denoted by $\cont{T}$, is the weak composition $(\alpha_1,\dots,\alpha_m)$ where $\alpha_i$ for $1\leq i\leq m$ counts the number of times $i$ appears in $T$.
The \bemph{standardization} of $T$, denoted by $\stan{T}$, is obtained by replacing the $\alpha_i$ entries in $T$  equal to $i$ by the integers $1+\sum_{j=1}^{i-1}\alpha_j$ through $\sum_{j=1}^{i}\alpha_j$ from left to right.
Thus, $\stan{T}$ is  standard and knowing $\cont{T}$ allows us to recover $T$.
We associate a word with $\stan{T}$ by reading the entries from largest to smallest and noting the column in which they belong.
We call this word the \bemph{column growth word} of $T$ and denote it by $\cgw{T}$.
For the tableau $T$ in Figure~\ref{fig: column growth word}, we have $\cgw{T}=76564321531$.
We extend the definition of column growth word to all tableaux by setting $\cgw{T}\coloneqq \cgw{\stan{T}}$.
\begin{figure}[ht]
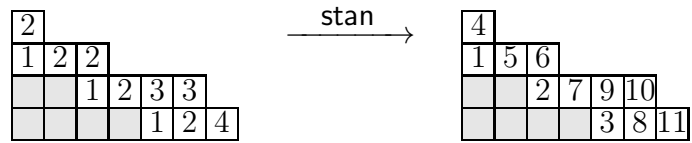

$$
\ytableausetup{mathmode, boxsize=1em}
\begin{ytableau}
2\\1& 2& 2\\*(gray!20) & *(gray!20) & 1 & 2 & 3 & 3\\ *(gray!20) & *(gray!20) & *(gray!20) & *(gray!20)  & 1 & 2 & 4
\end{ytableau}
\hspace{5mm}
\xrightarrow{\makebox[1.5cm]{{\sf stan}}}\hspace{5mm}
\begin{ytableau}
4\\1& 5& 6\\*(gray!20) & *(gray!20) & 2 & 7 & 9 & 10\\ *(gray!20) & *(gray!20) & *(gray!20) & *(gray!20) & 3 & 8 & 11
\end{ytableau}
$$
\caption{A Young tableau  and its standardization.}
\label{fig: column growth word}
\end{figure}

A \bemph{descent} of a standard Young tableau $T$ with $n$ boxes is an integer $i$ satisfying $1\leq i\leq n-1$ such that $i+1$ occupies a row strictly above that occupied by $i$.
The \bemph{descent set} of $T$ is the collection of descents of $T$, and the \bemph{descent composition} is the composition of $n$ corresponding to the descent set.
The descent set of the standard Young tableau on the right in Figure~\ref{fig: column growth word} is $\{3,8\}$.


\subsection{Composition tableaux}
To define composition tableaux, we need an analogue of Young's lattice.
The \bemph{Young composition poset} $\Lc$ is the poset on compositions where the partial order $<_c$ is obtained by taking the transitive closure of the cover relation $\lessdot_c$ defined next.
Let $\beta=(\beta_1,\ldots,\beta_m)$.
Then $ \beta \lessdot_c \alpha$ if exactly one of the following conditions holds.
\begin{itemize}
    \item $\alpha=(\beta_1,\ldots,\beta_m,1)$.
    \item $\alpha=(\beta_1,\ldots,\beta_k+1,\ldots,\beta_m)$ for some $k$ where $\beta_k\neq \beta_i$ for all $i>k$.
\end{itemize}
The reader may check that, for instance, the compositions covering $(2,1,3,2)$ in $\Lc$ are $(2,1,3,2,1)$, $(2,2,3,2)$, $(2,1,3,3)$ and $(2,1,4,2)$.
\begin{remark}\label{rem:alternate definition partial order L_c}\emph{
  The definition of  {$\lessdot_c $} implies that $\beta<_c \alpha$ if and only if for every $\beta_i\geq \beta_j$ where $i>j$ we have $\alpha_i\geq \alpha_j$.}
\end{remark}
If $\beta <_{c} \alpha$ and $\beta$ is drawn in the bottom left corner of $\alpha$, then
the \bemph{skew composition shape} $\alpha \cskew \beta$ is
defined to be the array of boxes that belong to $\alpha$ but not to $\beta$.
We refer to $\alpha$ and $\beta$ as the \bemph{outer shape} and \bemph{inner shape} respectively. If the inner shape is $\varnothing$, instead of writing $\alpha \cskew \varnothing$, we just write $\alpha$ and refer to $\alpha$ as a \bemph{straight shape}. The \bemph{size} of  $\alpha\cskew \beta$, denoted by $\vert \alpha\cskew\beta\vert$, is $\vert \alpha\vert-\vert\beta\vert$.

A \bemph{composition tableau} (abbreviated to $\CT$) $\tau$ of \bemph{shape} $\alpha \cskew \beta$ is a filling
$
\tau: \alpha\cskew \beta \longrightarrow \bP
$
that satisfies the following conditions.
\begin{enumerate}
\item The entries in each row increase weakly from left to right.
\item The entries in the leftmost column increase strictly from bottom to top.
\item For any configuration in $\tau$ of the type in Figure~\ref{fig:triple configuration}, if $a\leq c$ then $b<c$. 
\end{enumerate}
\begin{figure}[ht]
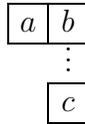

	\centering
	\begin{align*}
	\ytableausetup{mathmode,boxsize=1.25em}
	\begin{ytableau}
	a & b\\
	\none & \none[\vdots]\\
	\none & c
	\end{ytableau}
	\end{align*}
	\caption{A triple {configuration.}}
	\label{fig:triple configuration}
\end{figure}


A {composition tableau} is  \bemph{standard} if the filling $\tau$ is a bijection between $\alpha\cskew\beta$ and $[|\alpha\cskew \beta|]$.
We denote the set of CTs (respectively SCTs) of shape $\alpha\cskew \beta$ by $\CT(\alpha\cskew \beta)$ (respectively $\SCT(\alpha\cskew\beta)$).
Figure~\ref{fig:2cts} depicts a tableau in $\CT((3,6,1,7)\cskew (2,4))$ on the left, where the shaded boxes belong to the inner shape.
The \bemph{column reading word} of $\tau\in \CT(\alpha\cskew \beta)$, denoted by $\crw{\tau}$, is obtained by reading the entries in every column in {decreasing} order, starting from the leftmost column and going to the right.

Given a composition $\alpha = (\alpha _1, \dots , \alpha _{k})$, the \bemph{canonical  composition tableau} $\tau _\alpha$ is constructed by filling the boxes in the $i$-th row of the composition diagram of $\alpha$ with consecutive positive integers from $1+\sum_{j=1}^{i-1}\alpha_j$ to $\sum_{j=1}^{i}\alpha_j$ from left to right,  for $1\leq i\leq k$.
Figure~\ref{fig:2cts} shows the canonical CT of shape $(4,2,3,1)$ on the right.
\begin{figure}[!htbp]
	\centering
	\begin{align*}
	\ytableausetup{mathmode,boxsize=1em}
  \begin{ytableau}
	2 & 2& 2& 2& 3& 3 & 4\\
	1\\
	*(gray!20) & *(gray!20) & *(gray!20) & *(gray!20) & 1 & 2\\
	*(gray!20) &*(gray!20) & 1
\end{ytableau}\hspace{20mm}
	\begin{ytableau}
	10\\
	7 & 8 & 9\\
	5 & 6\\
	1 & 2 & 3 & 4
	\end{ytableau}
	\end{align*}
	\caption{An SCT of shape $( 3,6,1,7)\cskew (2,4)$ (left) and the canonical CT of shape $(4,2,3,1)$ (right).}
	\label{fig:2cts}
\end{figure}


\subsection{The \texorpdfstring{$\rho$}{rho} map}
Next we discuss a crucial map that establishes the bridge between the combinatorics of composition tableaux and that of Young tableaux.
Let $\CT(-\cskew \beta)$ denote the set of all $\CT$s with inner shape $\beta$ and $\YT(-/\partitionof{\beta})$ denote the set of all $\YT$s with inner shape $\sort(\beta)$.
Then the map $\rho _{\beta}: \CT(-\cskew \beta) \to \YT(- / \sort(\beta))$, which generalizes the map for semistandard skyline fillings \cite{mason-1} and is introduced in \cite[Chapter 4]{QSbook}, is defined as follows.
Given $\tau\in \CT(-\cskew \beta)$, obtain $\rho_\beta(\tau)$ by writing the entries in each column in increasing order from bottom to top and bottom-justifying these new columns on the inner shape $\sort(\beta)$, which might be empty.

The inverse map $
\rho ^{-1}_\beta : \YT(-/\sort({\beta}))\to \CT(-\cskew \beta)
$ is also straightforward to define.
Given $T\in \YT(-/\sort({\beta}))$,
\begin{enumerate}
\item take the set of $i$ entries in the leftmost column of $T$ and write them in increasing order in rows $\ell(\beta)+1, 2, \ldots, \ell(\beta)+i$ above the inner shape $\beta$ in the first column  to form the leftmost column of $\tau$,
\item take the set of entries in column 2 in increasing order and place them in the row with the largest index so that either
\begin{itemize}
\item the box to the immediate left of the number being placed is filled and the row entries weakly increase when read from left to right, or
\item the box to the immediate left of the number being placed belongs to the inner shape,
\end{itemize}
\item repeat the previous step with the set of entries in column $k$ for $k= 3, \ldots , m$ where $m$ is the largest part of $\sort(\beta)$.
\end{enumerate}
In the case $\beta=\varnothing$, the map $\rho_{\beta}$ is  Mason's shift map \cite{mason-1} (also known as the $\rho$ map), and we set $\rho\coloneqq\rho_{\varnothing}$.
The reader may verify that the Young tableau $T$ on the left in Figure~\ref{fig: column growth word} maps to the composition tableau on the left in Figure~\ref{fig:2cts} under $\rho_{(2,4)}^{-1}$.

\begin{remark}\emph{
  In view of the map $\rho_{\beta}$, all combinatorial notions discussed in the context of Young tableaux are inherited by composition tableaux, and we refrain from discussing all except one.
 Given $\tau\in \CT(\alpha\cskew\beta)$, its \bemph{rectification}, denoted by $\rect{\tau}$, is defined to be $\rho^{-1}(\rect{\rho_{\beta}(\tau)})$.
 We make note of one important consequence: Let $\mu$ be a partition, and $\beta^{(1)}$, $\beta^{(2)}$ be compositions such that $\sort(\beta^{(i)})=\mu$ for $i=1,2$.
 Given $T\in \YT(\lambda/\mu)$, let $\tau_{i}=\rho_{\beta^{(i)}}^{-1}(T)$ for $i=1,2$.
 As $\crw{\tau_1}=\crw{\tau_2}$, we infer that $\rect{\tau_1}=\rect{\tau_2}$.}
\end{remark}

\subsection{Classical and noncommutative LR coefficients}\label{subsec:LR coeffs}
We refer the reader to \cite{GKLLRT,QSbook,Gessel} for background on noncommutative symmetric functions and quasisymmetric functions.
Recall that the classical LR rule provides a combinatorial way to compute the structure coefficients $c_{\nu\mu}^{\lambda}$ in
\[
s_{\nu}s_{\mu}=\sum_{\lambda\vdash |\mu|+|\nu|}c_{\nu\mu}^{\lambda}s_{\lambda}.
\]
These same structure coefficients also arise in the expansion of the skew Schur function $s_{\lambda/\mu}$.
The LR rule was stated by Littlewood-Richardson \cite{Littlewood-Richardson} in 1934, and the first rigorous proofs were obtained by Thomas \cite{thomas} and Sch\"utzenberger \cite{schutzenberger} four decades later.
We call $T\in \YT(\lambda/\mu)$ a \bemph{Littlewood-Richardson tableau} (henceforth \bemph{LR tableau}) if $\cont{T}$ is a partition and $\crw{T}$ is reverse lattice.
The set of LR tableaux of shape $\lambda\pskew\mu$ and content $\nu$ is denoted by
$\LRT(\lambda,\mu,\nu)$.
The classical LR coefficient $c_{\nu\mu}^{\lambda}$ equals $|\LRT(\lambda,\mu,\nu)|$.
Figure~\ref{fig:LR-examples} depicts the LR tableaux that contribute to $c_{(4,3,1)(6,4,4)}^{(7,6,4,3,2)}$.
For various interesting combinatorial interpretations of LR coefficients, the reader is referred to \cite{Leeuwen,fomin-greene-LR}. For a Hopf-algebraic perspective, see \cite{LamLauveSottile}, and for a beautiful unifying polytopal perspective,  see \cite{Pak-Vallejo}.

\begin{figure}[ht]
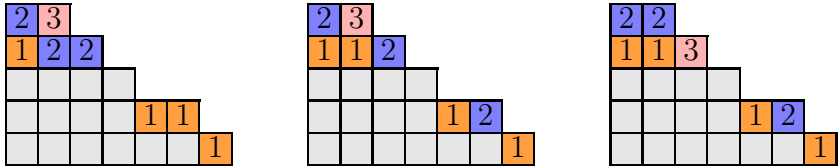

\begin{align*}
\ytableausetup{mathmode,boxsize=1em}
\begin{ytableau}
*(blue!50)2 & *(red!30)3\\
*(orange!75)1 & *(blue!50)2 & *(blue!50)2\\
*(gray!20) &*(gray!20) & *(gray!20) &*(gray!20)\\
*(gray!20) &*(gray!20) &*(gray!20) &*(gray!20) & *(orange!75)1 & *(orange!75)1\\
*(gray!20) &*(gray!20) &*(gray!20) &*(gray!20) &*(gray!20) &*(gray!20) & *(orange!75)1
\end{ytableau}
\hspace{10mm}
\begin{ytableau}
*(blue!50)2 & *(red!30)3\\
*(orange!75)1 & *(orange!75)1 & *(blue!50)2\\
*(gray!20) &*(gray!20) & *(gray!20) &*(gray!20)\\
*(gray!20) &*(gray!20) &*(gray!20) &*(gray!20) & *(orange!75)1 & *(blue!50)2\\
*(gray!20) &*(gray!20) &*(gray!20) &*(gray!20) &*(gray!20) &*(gray!20) & *(orange!75)1\\
\end{ytableau}
\hspace{10mm}
\begin{ytableau}
*(blue!50)2 & *(blue!50)2\\
*(orange!75)1 & *(orange!75)1 & *(red!30)3\\
*(gray!20) &*(gray!20) & *(gray!20) &*(gray!20)\\
*(gray!20) &*(gray!20) &*(gray!20) &*(gray!20) & *(orange!75)1 & *(blue!50)2\\
*(gray!20) &*(gray!20) &*(gray!20) &*(gray!20) &*(gray!20) &*(gray!20) & *(orange!75)1\\
\end{ytableau}
\end{align*}
\caption{The three LR tableaux contributing to $c_{(4,3,1)(6,4,4)}^{(7,6,4,3,2)}$.}
\label{fig:LR-examples}
\end{figure}

To describe the noncommutative LR rule, we need noncommutative analogues of Schur functions or, equivalently,
quasisymmetric analogues of skew Schur functions.
Following \cite[Proposition 5.2.6]{QSbook}, we define the \bemph{skew quasisymmetric Schur function} indexed by  $\alpha\cskew \beta$ to be
\begin{align}\label{eqn:def_skew_qs}
\qs_{\alpha\cskew\beta}\coloneqq\sum_{\tau\in \CT(\alpha\cskew\beta)}\mathbf{x}^{\cont{\tau}}.
\end{align}
In \eqref{eqn:def_skew_qs}, $\cont{\tau}$ refers to the content of $\tau$ and $\mathbf{x}^{\cont{\tau}}\coloneqq x_1^{\alpha_1}\cdots x_m^{\alpha_m}$ where $m$ is the largest entry in $\tau$ and $(\alpha_1,\dots,\alpha_{m})=\cont{\tau}$.
If $\beta=\varnothing$ in \eqref{eqn:def_skew_qs}, instead of writing $\qs_{\alpha\cskew \varnothing}$, we write $\qs_{\alpha}$ and call this the \bemph{quasisymmetric Schur function} indexed by $\alpha$.
Additionally, we set $\qs_\varnothing =1$.


The noncommutative Schur functions are defined indirectly \cite[Definition 5.6.1]{QSbook} as elements of the basis in $\nsym$ dual  to the basis of quasisymmetric Schur functions in $\qsym$.
We state  the LR rule for noncommutative Schur functions, equivalent to \cite[Theorem 5.6.2]{QSbook}.
\begin{theorem}\label{thm:nclr_rule_blvw}
  Let $\alpha,\beta$ be compositions. Then
  \[
  \ncsa\ncsb=\sum_{\gamma\vDash |\alpha|+|\beta|} C_{\alpha\beta}^{\gamma}\, \ncsg,
  \]
  where $C_{\alpha\beta}^{\gamma}$ is the number of $\SCT$s of shape $\gamma\cskew\beta$ that rectify to  $\tau_{\alpha}$.
\end{theorem}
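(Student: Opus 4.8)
The plan is to use the duality between $\nsym$ and $\qsym$ to turn the statement into a computation of the expansion of the skew quasisymmetric Schur function $\qs_{\gamma\cskew\beta}$ from \eqref{eqn:def_skew_qs} in the basis $\{\qs_{\alpha}\}$. Since $\{\ncsa\}$ is dual to $\{\qs_{\alpha}\}$ and multiplication in $\nsym$ is dual to comultiplication in $\qsym$, one has $C_{\alpha\beta}^{\gamma}=\langle \ncsa\otimes\ncsb,\Delta\qs_{\gamma}\rangle$; applying $\langle \ncsb,\cdot\rangle$ to the second tensor factor and identifying the resulting one-sided coproduct of $\qs_{\gamma}$ with the combinatorial object of \eqref{eqn:def_skew_qs} (see \cite[Proposition 5.2.6]{QSbook}) gives $C_{\alpha\beta}^{\gamma}=\langle \ncsa,\qs_{\gamma\cskew\beta}\rangle$. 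As $\{\ncsa\}$ and $\{\qs_{\alpha}\}$ are dual bases, $C_{\alpha\beta}^{\gamma}$ is precisely the coefficient of $\qs_{\alpha}$ in $\qs_{\gamma\cskew\beta}$, so it remains to prove
\[
\qs_{\gamma\cskew\beta}=\sum_{\alpha}\bigl|\{\tau\in\SCT(\gamma\cskew\beta):\rect{\tau}=\tau_{\alpha}\}\bigr|\,\qs_{\alpha}.
\]

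To establish this I would group $\CT(\gamma\cskew\beta)$ according to rectification. Rectification of a composition tableau preserves content, since $\rho_{\beta}$, jeu-de-taquin rectification of Young tableaux, and $\rho^{-1}$ each do; hence, using \eqref{eqn:def_skew_qs},
\[
\qs_{\gamma\cskew\beta}=\sum_{\tau'}\bigl|\{\tau\in\CT(\gamma\cskew\beta):\rect{\tau}=\tau'\}\bigr|\;\mathbf{x}^{\cont{\tau'}},
\]
the sum running over all straight-shape composition tableaux $\tau'$. The decisive input is the composition-tableau analogue of the classical fact that, for a fixed skew shape, the number of standard skew tableaux rectifying to a given straight tableau depends only on that tableau's shape (and equals a Littlewood--Richardson coefficient): namely, that $\bigl|\{\tau\in\CT(\gamma\cskew\beta):\rect{\tau}=\tau'\}\bigr|$ depends only on the composition shape of $\tau'$. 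Transported through $\rho_{\beta}$ and $\rho$ to skew Young tableaux, this amounts to understanding how the outer composition shape of $\rho_{\beta}^{-1}(T)$ interacts with jeu de taquin; this is where the combinatorics of the $\rho$ map does the real work, and it is in essence \cite[Theorem 5.6.2]{QSbook}. I expect this fibre-independence statement to be the main obstacle.

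Granting it, set $N_{\alpha}\coloneqq\bigl|\{\tau\in\CT(\gamma\cskew\beta):\rect{\tau}=\tau_{\alpha}\}\bigr|$. Every straight-shape composition tableau of shape $\alpha$ then has fibre of size $N_{\alpha}$, so regrouping the straight-shape tableaux by their shape and invoking $\qs_{\alpha}=\sum_{\tau'\in\CT(\alpha)}\mathbf{x}^{\cont{\tau'}}$ yields $\qs_{\gamma\cskew\beta}=\sum_{\alpha}N_{\alpha}\,\qs_{\alpha}$. Comparing with the first paragraph gives $C_{\alpha\beta}^{\gamma}=N_{\alpha}$. Finally, $\cont{\tau_{\alpha}}=(1,\dots,1)$ and rectification preserves content, so any $\tau$ with $\rect{\tau}=\tau_{\alpha}$ is automatically standard; hence $N_{\alpha}=\bigl|\{\tau\in\SCT(\gamma\cskew\beta):\rect{\tau}=\tau_{\alpha}\}\bigr|$, which is the claimed count.
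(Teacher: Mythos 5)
The paper does not actually prove this statement: it is quoted as being equivalent to \cite[Theorem 5.6.2]{QSbook}, i.e.\ it is the Bessenrodt--Luoto--van Willigenburg noncommutative LR rule, imported as a known result. So the relevant question is whether your sketch would constitute an independent proof, and it would not. Your first paragraph is a correct and standard reduction: since $\{\ncsa\}$ and $\{\qs_\alpha\}$ are dual bases and the product in $\Nsym$ is dual to the coproduct in $\qsym$, the coproduct formula $\Delta\qs_\gamma=\sum_\beta \qs_{\gamma\cskew\beta}\otimes\qs_\beta$ turns $C_{\alpha\beta}^{\gamma}$ into the coefficient of $\qs_\alpha$ in $\qs_{\gamma\cskew\beta}$. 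Your final paragraph (content preservation forcing standardness of any $\tau$ with $\rect{\tau}=\tau_\alpha$) is also fine. But the entire content of the theorem now sits in your ``decisive input'': that the fibre $\{\tau\in\CT(\gamma\cskew\beta):\rect{\tau}=\tau'\}$ has cardinality depending only on the shape of $\tau'$. You flag this as the main obstacle and then in effect cite \cite[Theorem 5.6.2]{QSbook} for it --- which is the theorem being proved. That is a circular deferral, not a proof.

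The gap is genuine and not merely bureaucratic, because the fibre-independence statement does \emph{not} follow formally from the classical jeu-de-taquin fact you invoke as its model. Transporting through $\rho_\beta$, the fibre over $\tau'$ of shape $\alpha$ is $\{T\in\YT(\lambda/\mu):\sh(\rho_\beta^{-1}(T))=\gamma\cskew\beta,\ \rect{T}=\rho(\tau')\}$ with $\lambda=\sort(\gamma)$, $\mu=\sort(\beta)$. Classical dual-equivalence theory controls $\{T\in\YT(\lambda/\mu):\rect{T}=U\}$ uniformly in $U$ of fixed partition shape, but the extra condition $\sh(\rho_\beta^{-1}(T))=\gamma$ cuts this set by a constraint that depends on the individual entries of $T$, not just its outer partition shape; there is no a priori reason the cut is equidistributed over the straight-shape tableaux $\tau'$ of a fixed composition shape $\alpha$. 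Establishing exactly this kind of compatibility between $\rho_\beta^{-1}$ and rectification is the hard combinatorial work (done in \cite{BLvW} and \cite[Chapter 5]{QSbook}, and approached in the present paper via frank words and Proposition~\ref{prop:crystal action permutes parts}). As written, your proposal reduces the theorem to its own core and stops there.
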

As alluded to in the introduction, one of our  primary motivations for this article is a description for noncommutative LR coefficients involving LR tableaux.
In particular, we seek an appropriate analogue to LR tableaux in the context of composition tableaux.

\subsection{Crystal reflection operators and LR tableaux}
For an in-depth exposition on crystal bases and their relevance in algebraic combinatorics and representation theory, we refer the reader to \cite{bump-schilling}. We proceed to describe crystal reflection operators as they are pertinent for our purposes.

Given a positive integer $i$, we define the \bemph{crystal reflection operator}  $s_i$  acting  on the set of Young tableaux as follows.
Let $T\in \YT(\lambda/\mu)$, and let $w=w_1\dots w_n\coloneqq\crw{T}$.
Scan $w$ from left to right and pair each $i+1$ with the closest unpaired $i$ that follows.
If no further pairing is possible, change all  unpaired $i$'s to $i+1$'s or vice versa depending on whether the number of $i$'s is greater than the number of $i+1$'s or not.
 Say the new word obtained via this procedure is $w'$.
Define $s_i(T)$ to be the unique YT of shape $\lambda\pskew \mu$ such that $\crw{s_i(T)}=w'$.
 Lascoux-Sch\"utzenberger \cite{Lascoux-Schutzenberger-1} (see also \cite[Proposition 9]{Reiner-Shimozono-plactification} and \cite[Section 3]{Lascoux-Schutzenberger-keys}) proved that the operators $s_i$ define an action of the (infinite) symmetric group on $\YT(\lambda/\mu)$  by establishing the following relations.
\begin{align}
  s_i^2&=Id,\nonumber\\
  s_is_j&=s_js_i \text{ if } |i-j|>1,\nonumber\\
  s_is_{i+1}s_i&=s_{i+1}s_is_{i+1}.
\end{align}
These relations imply that $\sigma(T)$ is well defined for any permutation $\sigma$. In partitcular, to compute $\sigma(T)$,  let $s_{i_1}\cdots s_{i_k}$ be any reduced word for $\sigma$ and compute $s_{i_1}\cdots s_{i_k}(T)$.

For  $\sigma\in \sgrp{\ell(\nu)}$, define
\begin{align}\label{eqn:lr_tableaux_crystal_action}
\LRT^{\sigma}(\lambda,\mu,\nu)\coloneqq\{\sigma(T)\suchthat T\in \LRT(\lambda,\mu,\nu)\}.
\end{align}
Since crystal reflection operators define an $\sgrp{|\ell(\nu)|}$-action, we have
\begin{align}\label{eqn:lr_coefficient_crystal}
|\LRT^{\sigma}(\lambda,\mu,\nu)|=|\LRT(\lambda,\mu,\nu)|=c_{\nu\mu}^{\lambda},
\end{align}
for all permutations $\sigma \in \sgrp{\ell(\nu)}$.
Figure~\ref{fig:crystal action on LR tableaux} shows all tableaux in $\LRT^{\sigma}(\lambda,\mu,\nu)$ where $\lambda=(7,6,4,3,2)$, $\mu=(6,4,4)$, $\nu=(4,3,1)$ and $\sigma=s_1s_2$.
The tableaux in $\LRT(\lambda,\mu,\nu)$ are shown in Figure~\ref{fig:LR-examples}.
Note that all tableaux in Figure~\ref{fig:crystal action on LR tableaux} have content $(1,4,3)=\sigma\cdot (4,3,1)$.

\begin{figure}[ht]
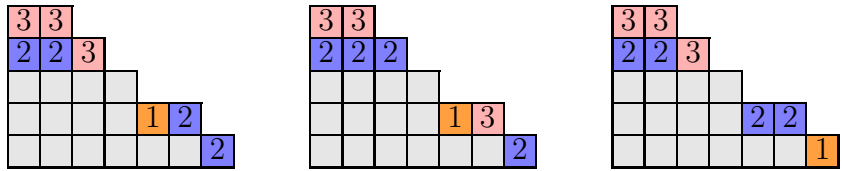

\begin{align*}
  \ytableausetup{mathmode,boxsize=1em}
\begin{ytableau}
*(red!30)3 & *(red!30)3\\
*(blue!50)2 & *(blue!50)2 & *(red!30)3\\
*(gray!20) &*(gray!20) & *(gray!20) &*(gray!20)\\
*(gray!20) &*(gray!20) &*(gray!20) &*(gray!20) & *(orange!75)1 & *(blue!50)2\\
*(gray!20) &*(gray!20) &*(gray!20) &*(gray!20) &*(gray!20) &*(gray!20) & *(blue!50)2\\
\end{ytableau}
\hspace{10mm}
\begin{ytableau}
*(red!30)3 & *(red!30)3\\
*(blue!50)2 & *(blue!50)2 & *(blue!50)2\\
*(gray!20) &*(gray!20) & *(gray!20) &*(gray!20)\\
*(gray!20) &*(gray!20) &*(gray!20) &*(gray!20) & *(orange!75)1 & *(red!30)3\\
*(gray!20) &*(gray!20) &*(gray!20) &*(gray!20) &*(gray!20) &*(gray!20) & *(blue!50)2\\
\end{ytableau}
\hspace{10mm}
\begin{ytableau}
*(red!30)3 & *(red!30)3\\
*(blue!50)2 & *(blue!50)2 & *(red!30)3\\
*(gray!20) &*(gray!20) & *(gray!20) &*(gray!20)\\
*(gray!20) &*(gray!20) &*(gray!20) &*(gray!20) & *(blue!50) 2 & *(blue!50)2\\
*(gray!20) &*(gray!20) &*(gray!20) &*(gray!20) &*(gray!20) &*(gray!20) & *(orange!75)1\\
\end{ytableau}
\end{align*}
\caption{Crystal reflection corresponding to $s_1s_2$ on the tableaux in Figure~\ref{fig:LR-examples}.}
\label{fig:crystal action on LR tableaux}
\end{figure}

It transpires that even though the elements in $\LRT^{\sigma}(\lambda,\mu,\nu)$ for an arbitrary permutation $\sigma$ shed no further light on classical LR coefficients, they do carry crucial information as far as computing noncommutative LR coefficients is concerned.
To motivate our upcoming results and to establish a connection with Theorem~\ref{thm:nclr_rule_blvw}, we invite the reader to check that  for all tableaux $T$ in Figure~\ref{fig:crystal action on LR tableaux}, we have that $\rho^{-1}(\rect{T}) $ is the CT in Figure~\ref{fig:rectification_lr_tableaux}.
Note that this CT is the unique CT with shape and content equaling  $(1,4,3)$.
Equivalently, we could say that  $\rho^{-1}(\rect{\stan{T}}) $ is the canonical CT of shape $(1,4,3)$.

\begin{figure}[ht]
\begin{align*}
  \ytableausetup{mathmode,boxsize=1em}
\begin{ytableau}
3 & 3& 3\\
2 & 2 &2 & 2\\
1
\end{ytableau}
\end{align*}
\caption{The rectification of all tableaux in Figure~\ref{fig:crystal action on LR tableaux}.}
\label{fig:rectification_lr_tableaux}
\end{figure}

In Section~\ref{sec:Frank words}, we establish this property in general by employing various properties of frank words and their relation to jeu-de-taquin. Prior to that, we describe our main result and illustrate it with examples.
\section{Main result}\label{sec:main_result}

Our first main result provides a combinatorial description for $C_{\alpha\beta}^{\gamma}$ using crystal reflection operators.
We state here the main theorem and provide a proof which assumes results we establish in Section~\ref{sec:Frank words}.
\begin{theorem}\label{thm:main result 1}
Let $\alpha,\,\beta$ and $\gamma$ be compositions such that $|\beta|+|\alpha|=|\gamma|$. Let $(\lambda,\mu,\nu)$ be the triple $(\sort(\gamma), \sort(\beta),\sort(\alpha))$.
If $\sigma\in \mathfrak{S}_{\ell(\nu)}$ is a permutation such that $\alpha=\sigma\cdot \nu$, then
\[
C_{\alpha\beta}^{\gamma}=|\{T\in \LRT^{\sigma}(\lambda,\mu,\nu) \suchthat \sh(\rho_{\beta}^{-1}(T))=\gamma\cskew \beta\}|
\]
\end{theorem}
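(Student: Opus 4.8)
The plan is to reduce the statement, via the noncommutative Littlewood--Richardson rule, to a count of standard composition tableaux, to match these against crystal-reflected LR tableaux by transporting through the map $\rho_\beta$, and finally to upgrade an injection to a bijection by a global count. Throughout, fix $\alpha,\beta$, put $\mu=\sort(\beta)$ and $\nu=\sort(\alpha)$, and fix $\sigma\in\sgrp{\ell(\nu)}$ with $\alpha=\sigma\cdot\nu$. For a partition $\lambda$ with $|\lambda|=|\alpha|+|\beta|$ and a composition $\gamma$ with $\sort(\gamma)=\lambda$, set
\[
\cA_\gamma=\{\tau\in\SCT(\gamma\cskew\beta)\suchthat \rect{\tau}=\tau_\alpha\},\qquad \cB_\gamma=\{T\in\LRT^\sigma(\lambda,\mu,\nu)\suchthat \sh(\rho_\beta^{-1}(T))=\gamma\cskew\beta\}.
\]
Theorem~\ref{thm:nclr_rule_blvw} gives $C_{\alpha\beta}^{\gamma}=|\cA_\gamma|$, so the theorem is equivalent to the assertion that $|\cB_\gamma|=|\cA_\gamma|$ for every such $\gamma$.

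First I would observe that the sets $\cB_\gamma$, as $\gamma$ ranges over the rearrangements of $\lambda$, decompose $\LRT^\sigma(\lambda,\mu,\nu)$ into disjoint subsets: each $T$ in this set has $\rho_\beta^{-1}(T)\in\CT(\gamma\cskew\beta)$ for a unique $\gamma$, and applying $\rho_\beta$ and comparing shapes forces $\sort(\gamma)=\lambda$. I would then define $\Phi_\gamma\colon\cB_\gamma\to\cA_\gamma$ by $\Phi_\gamma(T)=\rho_\beta^{-1}(\stan{T})$. Since $\stan{T}$ is a standard Young tableau of shape $\lambda/\mu$, the tableau $\rho_\beta^{-1}(\stan{T})$ is a standard composition tableau with inner shape $\beta$; its outer shape equals that of $\rho_\beta^{-1}(T)$, namely $\gamma$, which is a routine consequence of the column-by-column placement rule defining $\rho_\beta^{-1}$ (standardization preserves the relevant weak inequalities between an entry being placed and the already-placed entry immediately to its left, because within a column of a Young tableau entries are distinct and equal entries across two columns are standardized left column before right). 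Hence $\Phi_\gamma(T)\in\SCT(\gamma\cskew\beta)$. By the definition of rectification for composition tableaux,
\[
\rect{\Phi_\gamma(T)}=\rho^{-1}(\rect{\rho_\beta(\rho_\beta^{-1}(\stan{T}))})=\rho^{-1}(\rect{\stan{T}}).
\]
The key input, to be proved in Section~\ref{sec:Frank words}, is that $\rho^{-1}(\rect{\stan{T}})=\tau_\alpha$ for every $T\in\LRT^\sigma(\lambda,\mu,\nu)$; granting this, $\Phi_\gamma$ indeed lands in $\cA_\gamma$. It is injective because $\rho_\beta$ is a bijection and every element of $\LRT^\sigma(\lambda,\mu,\nu)$ has content $\sigma\cdot\nu$, so $\Phi_\gamma(T_1)=\Phi_\gamma(T_2)$ yields $\stan{T_1}=\stan{T_2}$ and hence $T_1=T_2$.

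Next I would promote each injection to a bijection by matching totals. Summing over rearrangements $\gamma$ of $\lambda$, the disjoint decomposition above together with \eqref{eqn:lr_coefficient_crystal} gives $\sum_{\gamma}|\cB_\gamma|=|\LRT^\sigma(\lambda,\mu,\nu)|=c_{\nu\mu}^{\lambda}$, while \eqref{eqn: nc lr refines lr introduction} gives $\sum_{\gamma}|\cA_\gamma|=\sum_{\gamma}C_{\alpha\beta}^{\gamma}=c_{\nu\mu}^{\lambda}$. Since $\Phi_\gamma\colon\cB_\gamma\hookrightarrow\cA_\gamma$ is injective for each $\gamma$ and the two totals coincide, every $\Phi_\gamma$ is a bijection. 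Therefore $|\cB_\gamma|=|\cA_\gamma|=C_{\alpha\beta}^{\gamma}$, which is the theorem.

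The only genuinely hard step is the deferred claim that $\rho^{-1}(\rect{\stan{T}})=\tau_\alpha$, equivalently $\rect{\stan{T}}=\rho(\tau_\alpha)$, for all $T\in\LRT^\sigma(\lambda,\mu,\nu)$. Informally this says that the crystal reflection operators commute with jeu-de-taquin rectification and carry the superstandard tableau of shape $\nu$ to a tableau whose $\rho^{-1}$-image is $\tau_{\sigma\cdot\nu}$, so that the crystal $\sgrp{\ell(\nu)}$-action on LR tableaux merely permutes the parts of the rectification class; but for skew shapes and for the Lascoux--Sch\"utzenberger form of the $s_i$ this requires real work. I would prove it by translating to frank words: attach to each LR tableau a compatible frank word following Remmel--Shimozono (Section~\ref{subsec:skew shape compatibility}), show that a crystal reflection on the tableau corresponds to the Lascoux--Sch\"utzenberger symmetric group action on the associated frank word (Section~\ref{subsec:sym_actions_tableaux_words}), and then verify that this action on frank words permutes the underlying partition exactly according to $\sigma$ --- which is Proposition~\ref{prop:crystal action permutes parts}, the result that implies the theorem.
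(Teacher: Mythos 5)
Your proposal is correct and follows essentially the same route as the paper: both decompose $\LRT^{\sigma}(\lambda,\mu,\nu)$ by the outer shape of $\rho_{\beta}^{-1}(\stan{T})$, use Proposition~\ref{prop:crystal action permutes parts} to get the inequality $|\cB_\gamma|\le C_{\alpha\beta}^{\gamma}$ (you just make the underlying injection into the set counted by Theorem~\ref{thm:nclr_rule_blvw} explicit), and then force equality by comparing both totals with $c_{\nu\mu}^{\lambda}$ via \eqref{eqn: nc lr refines lr introduction} and \eqref{eqn:lr_coefficient_crystal}. Your identification of the deferred key input and the frank-word strategy for proving it also matches the paper exactly.
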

\begin{proof}
Let $(\lambda,\mu,\nu)$ be as in the statement of the theorem.
Recall from \eqref{eqn: nc lr refines lr introduction} in the introduction that
\begin{align}\label{eqn:nc lr refines lr}
c_{\nu\mu}^{\lambda}=\sum_{\sort(\delta)=\lambda}C_{\alpha\beta}^{\delta},
\end{align}
where we work with the convention that $C_{\alpha\beta}^{\delta}=0$ if $\delta$ does not lie above $\beta$ in $\Lc$.
Given $\gamma$ such that $\sort(\gamma)=\lambda$, define
\begin{align}
X_{\gamma}\coloneqq\{T\in \LRT^{\sigma}(\lambda,\mu,\nu)\suchthat \sh(\rho_{\beta}^{-1}(\stan{T}))=\gamma\}.
\end{align}
By Proposition~\ref{prop:crystal action permutes parts}, we have that $|X_{\gamma}|\leq C_{\alpha\beta}^{\gamma}$.
On the other hand, from \eqref{eqn:nc lr refines lr} and the fact that  $\sh(\rho_{\beta}^{-1}(\stan{T}))= \sh(\rho_{\beta}^{-1}(T))$ for all Young tableaux $T$ with inner shape $\mu$, we infer that
\begin{align}
|\LRT^{\sigma}(\lambda,\mu,\nu)|=\sum_{\sort(\delta)=\lambda}|X_{\delta}|.
\end{align}
Thus, we must have $|X_{\gamma}|=C_{\alpha\beta}^{\gamma}$.
\end{proof}

Theorem~\ref{thm:main result 1} gives us a way to compute $C_{\alpha\beta}^{\gamma}$ using crystal reflections and the generalized $\rho$ map provided we know $\LRT(\lambda,\mu,\nu)$.
We discuss an example next to illustrate Theorem~\ref{thm:main result 1}.
In \eqref{eqn:main_demo_1}, we depict $\rho_{(1,2)}^{-1}\circ s_1(T)$ for the $T$ in $\LRT((5,3,2),(2,1),(4,2,1))$. 
By considering the shapes of resulting CTs, we infer that $C^{(3,5,2)}_{(2,4,1)(1,2)}=1$ and $C^{(2,5,3)}_{(2,4,1)(1,2)}=1$.

\begin{align}\label{eqn:main_demo_1}
    \ytableausetup{mathmode,boxsize=1em}
      \begin{ytableau}
        2 &3\\ *(gray!20) & 1 &2\\ *(gray!20) & *(gray!20) & 1 & 1 & 1
      \end{ytableau}
     \xrightarrow{\makebox[2cm]{$s_1$}}
     \begin{ytableau}
       2 &3\\ *(gray!20) & 1 &2\\ *(gray!20) & *(gray!20) & 1 & 2 & 2
     \end{ytableau}
     \xrightarrow{\makebox[2cm]{$\rho_{(1,2)}^{-1}$}}
     \begin{ytableau}
       2 &3\\ *(gray!20) & *(gray!20) &1 & 2& 2\\ *(gray!20) & 1 & 2
     \end{ytableau}
     \nonumber\\
     \begin{ytableau}
       1 &3\\ *(gray!20) & 2 &2\\ *(gray!20) & *(gray!20) & 1 & 1 & 1
     \end{ytableau}
     \xrightarrow{\makebox[2cm]{$s_1$}}
     \begin{ytableau}
       2 &3\\ *(gray!20) & 2 &2\\ *(gray!20) & *(gray!20) & 1 & 1 & 2
     \end{ytableau}
     \xrightarrow{\makebox[2cm]{$\rho_{(1,2)}^{-1}$}}
     \begin{ytableau}
       2 &2 & 2\\ *(gray!20) & *(gray!20) &1 & 1& 2\\ *(gray!20)  & 3
     \end{ytableau}
\end{align}
On the other hand, if we compute $\rho_{(2,1)}^{-1}\circ s_1(T)$ for the same LR tableaux, then we obtain the two CTs in \eqref{eqn:main_demo_2}. We conclude that $C^{(3,5,2)}_{(2,4,1)(2,1)}=1$  and $C^{(5,2,3)}_{(2,4,1)(2,1)}=1$.

\begin{align}\label{eqn:main_demo_2}
    \ytableausetup{mathmode,boxsize=1em}
      \begin{ytableau}
        2 &3\\ *(gray!20) & 1 &2\\ *(gray!20) & *(gray!20) & 1 & 1 & 1
      \end{ytableau}
     \xrightarrow{\makebox[2cm]{$s_1$}}
     \begin{ytableau}
       2 &3\\ *(gray!20) & 1 &2\\ *(gray!20) & *(gray!20) & 1 & 2 & 2
     \end{ytableau}
     \xrightarrow{\makebox[2cm]{$\rho_{(2,1)}^{-1}$}}
     \begin{ytableau}
       2 &3\\ *(gray!20)  &1 & 1& 2& 2\\ *(gray!20) & *(gray!20) & 2
     \end{ytableau}
     \nonumber\\
     \begin{ytableau}
       1 &3\\ *(gray!20) & 2 &2\\ *(gray!20) & *(gray!20) & 1 & 1 & 1
     \end{ytableau}
     \xrightarrow{\makebox[2cm]{$s_1$}}
     \begin{ytableau}
       2 &3\\ *(gray!20) & 2 &2\\ *(gray!20) & *(gray!20) & 1 & 1 & 2
     \end{ytableau}
     \xrightarrow{\makebox[2cm]{$\rho_{(2,1)}^{-1}$}}
     \begin{ytableau}
       2 &2 &2\\ *(gray!20)  &3\\ *(gray!20) & *(gray!20)  & 1 & 1 & 2
     \end{ytableau}
\end{align}

\medskip

\section{Frank words and LR tableaux}\label{sec:Frank words}
In order to prove Theorem~\ref{thm:main result 1}, we need to understand the rectification of tableaux in $\LRT^{\sigma}(\lambda,\mu,\nu)$ followed by an application of the generalized $\rho$ map.
To this end, we study various aspects of column growth words of these tableaux.
In particular, we identify these words as certain frank words that satisfy an additional compatibility condition. This characterization eventually allows us to connect $\LRT^{\sigma}(\lambda,\mu,\nu)$ to the computation of noncommutative LR coefficients.

\subsection{A symmetric group action on frank words}\label{subsec:symmetric group action frank words}
Frank words were introduced by Lascoux-Sch\"utzenberger \cite{Lascoux-Schutzenberger-keys} in their investigation of key polynomials.
Subsequently, Reiner and Shimozono \cite{Reiner-Shimozono-Keys} studied the combinatorics of frank words in depth in the context of a flagged Littlewood-Richardson rule, and we follow their exposition as far as notions in this section are concerned.

Given a nonempty word $w\in \mbP$, consider its factorization $w^{(1)}w^{(2)}\cdots w^{(m)}$ where each $w^{(i)}$ is a maximal column word (necessarily nonempty).
We call $w$ an \bemph{$m$-column word}.
Define the \bemph{column form} of $w$ to be the composition $\colform{w}\coloneqq(|w^{(1)}|,\dots,|w^{(m)}|) $.
We say that $w$ is \bemph{frank} if $\ptab(w)$ is of shape $\lambda^{t}$ where $\lambda=\sort(\colform{w})$.
Given a composition $\alpha$, denote the set of frank words $w$ satisfying $\colform{w}=\alpha$ by $\frank{\alpha}$.
For instance, $w= \framebox{432}\hspace{1mm}\framebox{32} \hspace{1mm} \framebox{6531}$ is a $3$-column word with $\colform{w}=(3,2,4)$. Here, and henceforth, we will put frames around maximal columns words.
Figure~\ref{fig:w is frank} depicts $\ptab(w)$.
Note that the shape underlying it is $(4,3,2)^t$.
Therefore, $w$ is frank and belongs to $\frank{(3,2,4)}$.
We proceed to describe a symmetric group action on frank words that we later connect to the symmetric group action on Young tableaux described earlier.
This action is best understood by focusing on 2-column frank words.

\begin{figure}[!htbp]
$$
\ytableausetup{mathmode,boxsize=1em}
\begin{ytableau}
4\\3 & 6\\2 & 3 & 5\\1 & 2 & 3
\end{ytableau}
$$
\caption{The insertion tableau corresponding to $w=\framebox{432}\hspace{1mm} \framebox{32} \hspace{1mm} \framebox{6531}$.}
\label{fig:w is frank}
\end{figure}

Let $A$ denote the set of 2-column frank words.
Consider $w\in A$ and let $\colform{w}=(\beta_1,\beta_2)$.
By \cite[Appendix 2]{Reiner-Shimozono-Keys}, we have that $w$ may be identified as the column reading word of a tableau $T$ of shape $(\beta_1,\beta_2)^t$ if  $\beta_1\geq \beta_2 $ or of a tableau $T$ of shape $(\beta_2,\beta_2)^t/(\beta_2-\beta_1)^t$ if $\beta_1<\beta_2$.
If the former holds, define $\iota(w)$ to be the column reading word of the unique tableau $ T'$  of shape $(\beta_1,\beta_1)^t/(\beta_1-\beta_2)^t$ that is jdt-equivalent to $T$
(obtained by performing jeu-de-taquin slides within the rectangle $(\beta_1,\beta_1)^t$).
If the latter holds, define $\iota(w)$ to be the column reading word of the unique tableau $ T'$  of shape $(\beta_2,\beta_1)^t$ that is jdt-equivalent to $T$.
Clearly, $\iota$  is an involution on $A$. Equally importantly, $w$ and $\iota(w)$ are Knuth-equivalent.
For instance, jdt-equivalence of the tableaux in Figure~\ref{fig:jdt on two columns} implies that $\iota(\framebox{76421} \hspace{1mm} \framebox{632})=\framebox{621}\hspace{1mm}\framebox{76432}$.
\begin{figure}[ht]
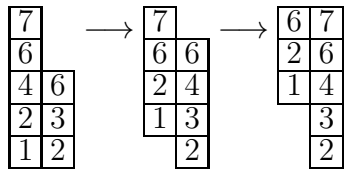

$$
\ytableausetup{mathmode,boxsize=1em}
\begin{ytableau}
7\\6\\4 & 6\\2 & 3\\1&2
\end{ytableau}
\longrightarrow
\begin{ytableau}
7\\6&6\\2& 4\\1 & 3\\\none&2
\end{ytableau}
\longrightarrow
\begin{ytableau}
6 & 7\\2 & 6\\1 & 4\\\none & 3\\\none & 2
\end{ytableau}
$$
\caption{Establishing $\iota(\framebox{76421}\hspace{1mm}\framebox{632})=\framebox{621} \hspace{1mm} \framebox{76432}$ by jeu-de-taquin.}
\label{fig:jdt on two columns}
\end{figure}

We employ the involution $\iota$ to construct the desired symmetric group action.
Let $\lambda$ be a partition and  let $m\coloneqq\ell(\lambda)$. 
Define
\[
\mathcal{F}_{\lambda}\coloneqq \coprod_{\sort(\beta)=\lambda}\frank{\beta}.
\]
Following \cite{Lascoux-Schutzenberger-keys}, define an action of $\sgrp{m}$ on $\mathcal{F}_{\lambda}$ by describing the action of the generator $s_i$ for $1\leq i\leq m-1$ as follows:
Let $w^{(1)}\cdots w^{(m)}$ be the maximal column word factorization of $w\in \mathcal{F}_{\lambda}$.
Let $v^{(i)}v^{(i+1)}\coloneqq\iota(w^{(i)}w^{(i+1)})$ and define $v=w^{(1)}\cdots w^{(i-1)}v^{(i)}v^{(i+1)}w^{(i+2)}\cdots w^{(m)}$.
Observe that $\sort(\colform{v})=\lambda$. As $v$ and $w$ are Knuth-equivalent, we infer that $v\in \mathcal{F}_{\lambda}$ .
We define $v$ to be $s_i(w)$.
This given, we may now define $\sigma(w)$ for any $\sigma\in \mathfrak{S}_m$ by making a choice of reduced word for $\sigma$.

\subsection{\texorpdfstring{$\lambda/\mu$}{lskewm}-compatible frank words and LR tableaux}\label{subsec:skew shape compatibility}
Given partitions $\lambda,\mu$ such that $\mu\subseteq \lambda$ and a composition $\alpha$, we say that $w\in \frank{\alpha}$ is \bemph{$\lambda/\mu$-compatible} if for every suffix $w'$ of $w$, we have that $\cont{w'}+\mu^t$ is a partition and that $\cont{w}+\mu^t=\lambda^t$.
All this says is that $w\in \frank{\alpha}$ is the column growth word of some tableau of shape $\lambda/\mu$.
Observe that we mush have $\alpha\vDash |\lambda|-|\mu|$.
For instance, $w=\framebox{621}\hspace{1mm} \framebox{76432} \in \frank{\alpha}$ is $\lambda/\mu$-compatible for $\lambda=(7,6,4,2,2)$, $\mu=(5,5,2,1)$, and $\alpha=(3,5)$.
The reader can easily verify that $w$ is the column growth word of the tableau in Figure~\ref{fig:phi of w}.

Define
\begin{align}
\lrfrank{\lambda,\mu,\alpha}\coloneqq\{w\in\frank{\alpha}\suchthat \text{ $w$ is $\lambda/\mu$-compatible}\}.
\end{align}
What is special about this subset of frank words with column form $\alpha$?
There is an intimate link between LR tableaux and compatible words that we motivate by the following example.
Consider $\lambda=(7,6,4,3,2)$, $\mu=(6,4,4)$ and $\alpha=(3,4,1)$.
The reader can verify that $ \lrfrank{\lambda,\mu,\alpha}$ consists of
$\framebox{\textcolor{red}{321}}\hspace{1mm}\framebox{\textcolor{blue}{7621}}\hspace{1mm}\framebox{\textcolor{orange}{5}}$,
$\framebox{\textcolor{red}{621}}\hspace{1mm}\framebox{\textcolor{blue}{7321}}\hspace{1mm}\framebox{\textcolor{orange}{5}}$, and
$\framebox{\textcolor{red}{321}}\hspace{1mm}\framebox{\textcolor{blue}{6521}}\hspace{1mm}\framebox{\textcolor{orange}{7}}$.
Remarkably, these words are precisely column growth words of tableaux in Figure~\ref{fig:crystal action on LR tableaux}.
We make this connection precise.

Given $w\in \lrfrank{\lambda,\mu,\alpha}$, let $w^{(1)}\cdots w^{(m)}$ be its maximal column word factorization where $m\coloneqq\ell(\alpha)$.
Construct a Young tableau $\phi(w)$ of shape $\lambda/\mu$ and content $\reverse{\alpha}$ as follows: Let $\lambda^{(0)}\coloneqq\lambda$ and inductively define $\lambda^{(i)}$ for $1\leq i\leq m$ to be such that
$\lambda^{(i-1)}/\lambda^{(i)}$ is a horizontal strip with boxes in columns given by letters appearing in $w^{(i)}$.
Subsequently, fill the boxes of the horizontal strips $\lambda^{(i-1)}/\lambda^{(i)}$ with $m+1-i$  to obtain $\phi(w)$.
Note that $\lambda^{(m)}$ is $\mu$ and that $\phi(w)$ does indeed belong to $\YT(\lambda/\mu)$.
As an example, consider  $w=\framebox{\textcolor{red}{621}} \hspace{1mm}\framebox{\textcolor{blue}{76432}}$ which is $\lambda/\mu$-compatible for $\lambda=(7,6,4,2,2)$ and $\mu=(5,5,2,1)$.
The tableau $\phi(w)$ is shown in Figure~\ref{fig:phi of w}.
\begin{figure}[ht]
  \ytableausetup{mathmode,boxsize=1em}
  \begin{ytableau}
    *(red!40)2 & *(red!40)2\\
    *(gray!20) & *(blue!50)1\\
    *(gray!20) & *(gray!20) & *(blue!50)1 & *(blue!50)1\\
    *(gray!20) & *(gray!20) & *(gray!20) & *(gray!20) &*(gray!20) & *(red!40)2\\
    *(gray!20) & *(gray!20) & *(gray!20) & *(gray!20) &*(gray!20) & *(blue!50)1 & *(blue!50)1
  \end{ytableau}
  \caption{The tableau $\phi(w)$ corresponding to $\framebox{\textcolor{red}{621}} \hspace{1mm}\framebox{\textcolor{blue}{76432}}$.}
  \label{fig:phi of w}
\end{figure}

Define
\begin{align}
\lrfranktab{\lambda,\mu,\alpha}\coloneqq\{\phi(w)\suchthat w\in \lrfrank{\lambda,\mu,\alpha}\}.
\end{align}
Given a partition $\lambda$, denote the skew shape obtained by a $180^{\circ}$ rotation by $\rot{\lambda}$.
Our next lemma establishes that $\LRT(\lambda,\mu,\nu)=\lrfranktab{\lambda,\mu, \reverse{\nu}}$.
\begin{lemma}\label{lem:LR tableaux and LRfranktab}
  Let $\lambda,\mu$ and $\nu$ be partitions such that $\mu\subseteq \lambda$ and $|\nu|=|\lambda/\mu|$.
 We have
 \[
 	 \LRT(\lambda,\mu,\nu)=\lrfranktab{\lambda,\mu, \reverse{\nu}}.
 \]
\end{lemma}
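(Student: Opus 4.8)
The statement asserts two sets of Young tableaux of shape $\lambda/\mu$ are equal: on one side $\LRT(\lambda,\mu,\nu)$, the LR tableaux of content $\nu$; on the other side $\lrfranktab{\lambda,\mu,\reverse{\nu}}$, the tableaux $\phi(w)$ arising from $\lambda/\mu$-compatible frank words $w$ with $\colform{w}=\reverse{\nu}$. My plan is to prove this by a double inclusion, translating both sides into conditions on the column growth word $\cgw{T}$ of a tableau $T \in \YT(\lambda/\mu)$ and unwinding the definition of $\phi$.

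First I would set up the correspondence between a Young tableau $T$ of shape $\lambda/\mu$ and a word, via the column growth word $\cgw{T}$. The key observation is that $\phi$ is essentially inverse to $w \mapsto \cgw{\cdot}$ on the relevant domain: given $w \in \lrfrank{\lambda,\mu,\alpha}$ with maximal column factorization $w^{(1)}\cdots w^{(m)}$, the tableau $\phi(w)$ is built by peeling off horizontal strips $\lambda^{(i-1)}/\lambda^{(i)}$ whose columns are recorded by the letters of $w^{(i)}$, and filling strip $i$ with the value $m+1-i$. Reading this backwards, the column growth word of $\phi(w)$ (reading entries of $\stan{\phi(w)}$ from largest to smallest, recording columns) recovers exactly $w$, because the largest standardized entries sit in the last strip added (value $1$, i.e.\ $i=m$), whose columns are the letters of $w^{(m)}$, and so on; within a strip the standardization orders entries left-to-right, matching the decreasing order within a maximal column word. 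So I would first record the clean statement: $\phi$ restricts to a bijection $\lrfrank{\lambda,\mu,\alpha} \to \lrfranktab{\lambda,\mu,\alpha}$ with inverse $T \mapsto \cgw{T}$, and moreover $\cont{\phi(w)} = \reverse{\alpha}$ by construction (strip $i$ has $|w^{(i)}|$ boxes all filled with $m+1-i$).

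Next, the heart of the argument: for $T \in \YT(\lambda/\mu)$ with $w := \cgw{T}$, I want to show that $T \in \LRT(\lambda,\mu,\nu)$ if and only if $w \in \lrfrank{\lambda,\mu,\reverse{\nu}}$. The content condition is immediate from the previous paragraph: $\cont{T}$ equals $\nu$ (a partition) exactly when $\reverse{\colform{w}} = \nu$, i.e.\ $\colform{w} = \reverse{\nu}$, which is what forces $w \in \frank{\reverse{\nu}}$ candidates to be considered, \emph{provided} the partition-sorting matches up — note $\sort(\reverse{\nu}) = \nu$ since $\nu$ is a partition. The compatibility condition ``$\cont{w'} + \mu^t$ is a partition for every suffix $w'$, with $\cont{w} + \mu^t = \lambda^t$'' is, unravelled, precisely the statement that $w$ is the column growth word of \emph{some} tableau of shape $\lambda/\mu$ — which is automatic here since $w = \cgw{T}$. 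So the genuinely substantive equivalence is: $\cont{T}$ is a partition and $\crw{T}$ is reverse-lattice $\iff$ $w = \cgw{T}$ is \textbf{frank}, i.e.\ $\ptab(w)$ has shape $\sort(\colform{w})^t = \nu^t$. This is where I would invoke the Remmel--Shimozono circle of ideas cited in the paper: the column growth word of an LR tableau, after standardization and column-recording, is Knuth-equivalent to a specific object whose $\ptab$ is the superstandard tableau of shape $\nu^t$ (a ``frank'' condition). Concretely, the reverse-lattice property of $\crw{T}$ translates, under the passage $T \rightsquigarrow \cgw{T}$, into the assertion that no partial insertion of $\cgw{T}$ exceeds the shape $\nu^t$ in any column, and that the full insertion fills $\nu^t$ exactly; since $w$ has $m = \ell(\nu)$ maximal columns of sizes rearranging $\nu$, having $\ptab(w)$ of shape $\nu^t$ is the maximal-possible (and this extremality is exactly frankness). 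I would present this as: $T$ is LR $\iff$ $\stan{T}$ is LR (standardization preserves the lattice property of reading words appropriately) $\iff$ $\cgw{T}$ is frank with column form $\reverse{\nu}$.

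\textbf{Main obstacle.} The delicate step is the precise dictionary between ``$\crw{T}$ is reverse-lattice'' (a condition on the row-and-column reading word of $T$ itself) and ``$\cgw{T}$ is frank'' (a condition on the Schensted shape of the column-growth word of $\stan{T}$). These two words live on different alphabets — $\crw{T}$ uses the entries of $T$, while $\cgw{T}$ uses column indices of $\stan{T}$ — so the translation is not a formal manipulation; it requires knowing how jeu-de-taquin / RSK interacts with the horizontal-strip decomposition, essentially the content of \cite{Remmel-Shimozono} on the dual RSK and frank words. I would handle this by citing the Remmel--Shimozono correspondence to identify $\cgw{T}$ (for $T$ an LR tableau of content $\nu$) with a frank word of column form $\reverse{\nu}$, and conversely checking that frankness plus $\lambda/\mu$-compatibility forces the peeled strips to encode a reverse-lattice reading word; the bookkeeping that frankness is equivalent to the insertion shape being the \emph{transpose} of the sorted column form (rather than merely dominated by it) is the crux, and it rests on the fact that an $m$-column word whose columns have sizes summing with multiplicities $\nu$ can have $\ptab$ of shape at most $\nu^t$, with equality detecting the LR condition. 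Once this equivalence is in hand, combining it with the bijectivity of $\phi$ and the content computation closes both inclusions.
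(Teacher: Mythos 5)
Your proposal is correct and follows essentially the same route as the paper's proof: both reduce the statement to the identification of $\cgw{T}$ with a frank word of column form $\reverse{\nu}$ (equivalently, the column reading word of a tableau of the rotated shape $\rot{\nu^t}$), outsourcing the equivalence between the reverse-lattice condition on $\crw{T}$ and frankness of $\cgw{T}$ to the Reiner--Shimozono/Remmel--Shimozono circle of ideas. The paper's argument is in fact terser than yours; your explicit bookkeeping of the bijectivity of $\phi$, the content computation, and the automatic suffix-compatibility is a faithful elaboration of the same proof.
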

\begin{proof}
Let $w\in \lrfrank{\lambda,\mu, \reverse{\nu}}$.
  As $\colform{w}=\reverse{\nu}$ and $w$ is frank, we know that $w$ is the column reading word of a tableau of skew shape $\rot{\nu^t}$.
 It follows that $\phi(w)\in \LRT({\lambda,\mu,\nu})$.
 This establishes $ \lrfranktab{\lambda,\mu, \reverse{\nu}}\subseteq \LRT(\lambda,\mu,\nu)$.
 The opposite inclusion relies follows since the column reading word of an LR tableaux is reverse-lattice. In particular, the column growth word of any $T\in \LRT(\lambda,\mu,\nu)$ is the column reading word of a Young tableaux of skew shape $\rot{\nu^t}$.
\end{proof}
Figure~\ref{fig:cgw of LR tableaux} depicts column growth words of tableaux in $\LRT(\lambda,\mu,\nu)$ (shown in Figure~\ref{fig:LR-examples}) as column reading words of Young tableaux of skew shape $\rot{\nu^t}$ where $\lambda=(7,6,4,3,2)$, $\mu=(6,4,4)$ and $\nu=(4,3,1)$.
\begin{figure}[ht]
$$
\ytableausetup{mathmode,boxsize=1em}
\begin{ytableau}
*(red!30)2& *(blue!50)3 & *(orange!75)7\\
*(gray!20) & *(blue!50)2 & *(orange!75)6\\
*(gray!20) & *(blue!50)1 & *(orange!75)5\\
*(gray!20) & *(gray!20) & *(orange!75)1
\end{ytableau}
\hspace{10mm}
\begin{ytableau}
*(red!30)2 & *(blue!50)6 & *(orange!75)7\\
*(gray!20) & *(blue!50)3 & *(orange!75)5\\
*(gray!20) & *(blue!50)1 & *(orange!75)2\\
*(gray!20) & *(gray!20) & *(orange!75)1
\end{ytableau}
\hspace{10mm}
\begin{ytableau}
*(red!30)3 & *(blue!50)6 & *(orange!75)7\\
*(gray!20) & *(blue!50)2 & *(orange!75)5\\
*(gray!20) & *(blue!50)1 & *(orange!75)2\\
*(gray!20) & *(gray!20) & *(orange!75)1
\end{ytableau}
$$
\caption{Column growth words of LR tableaux are frank words.}
\label{fig:cgw of LR tableaux}
\end{figure}

\subsection{Relating the two symmetric group actions}\label{subsec:sym_actions_tableaux_words}
Our next lemma connects the action of crystal reflection operators on $\lrfranktab{\lambda,\mu,\alpha}$  to the symmetric group action on $\lambda/\mu$-compatible frank words in $\lrfrank{\lambda,\mu,\alpha}$.
\begin{lemma}\label{lem:commuting action}
Consider $w\in \lrfrank{\lambda,\mu,\alpha}$ and $i$ satisfying $1\leq i\leq \ell(\alpha)-1$.
We have that $s_i(\phi(w))=\phi(s_{\ell(\alpha)-i}(w))$.
\end{lemma}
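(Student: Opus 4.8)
The plan is to reduce to the two-column case and then verify the statement by a direct comparison of column reading words. First I would unwind both sides in terms of reading words. On the tableau side, $s_i(\phi(w))$ is defined via the crystal reflection acting on $\crw{\phi(w)}$, which — because $\phi(w)$ is a Young tableau — is Knuth-equivalent to $\cgw{\phi(w)}$; and by construction of $\phi$ we have $\cgw{\phi(w)} = w$ (the letters of the maximal column word $w^{(i)}$ record exactly the columns of the horizontal strip filled with $m+1-i$, and reading largest-entry-to-smallest recovers $w^{(m)}w^{(m-1)}\cdots w^{(1)}$ in the right order — this is precisely why the index is reversed to $\ell(\alpha)-i$). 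On the word side, $s_{\ell(\alpha)-i}(w)$ replaces the adjacent pair of maximal column words $w^{(\ell(\alpha)-i)}w^{(\ell(\alpha)-i+1)}$ by $\iota$ of that pair, which by the definition of $\iota$ is a Knuth-equivalent rearrangement realized by jeu-de-taquin inside a two-column (or two-column-minus-a-rectangle) shape. So both operations preserve Knuth class, and it suffices to check that they produce the \emph{same} tableau of shape $\lambda/\mu$, equivalently that $\phi(s_{\ell(\alpha)-i}(w))$ has column reading word crystal-equivalent to $s_i(\crw{\phi(w)})$.

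The key reduction is that the crystal reflection $s_i$ acting on a Young tableau, when translated through $\cgw$, only sees the entries equal to $i$ and $i+1$, i.e. the two horizontal strips filled with the values $i$ and $i+1$ in $\phi(w)$; with the index reversal these correspond to the maximal column words $w^{(\ell(\alpha)-i)}$ and $w^{(\ell(\alpha)-i+1)}$. So I would isolate the sub-tableau of $\phi(w)$ consisting of just those two horizontal strips: its column reading word is exactly $w^{(\ell(\alpha)-i)}w^{(\ell(\alpha)-i+1)}$, and by the $\lambda/\mu$-compatibility this sub-tableau sits inside the two-row skew shape whose transpose governs the definition of $\iota$. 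At this point the claim becomes: the crystal reflection $s_1$ on a two-row (or two-row-minus-rectangle) Young tableau agrees, at the level of column reading words, with the involution $\iota$ on the corresponding two-column frank word. This is the content I would attribute to the jeu-de-taquin description of $\iota$ from \cite[Appendix 2]{Reiner-Shimozono-Keys} together with the standard fact (going back to Lascoux--Sch\"utzenberger) that on two-row skew shapes the Bender--Knuth-style crystal involution $s_1$ is realized by jeu-de-taquin rectification into the complementary two-row shape — precisely the move defining $\iota$ after transposing rows to columns.

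Having handled the isolated two-strip piece, I would reassemble: the remaining horizontal strips of $\phi(w)$ (those with values $\neq i, i+1$) are untouched by $s_i$, and the remaining maximal column words $w^{(j)}$ with $j \neq \ell(\alpha)-i, \ell(\alpha)-i+1$ are untouched by $s_{\ell(\alpha)-i}$; moreover $\iota$ preserves the underlying pair of sizes up to swap, so the shapes $\lambda^{(j)}$ outside the modified range are unchanged and $\phi$ of the new word is well-defined with the same inner shape $\mu$ and outer shape $\lambda$. Since the two-strip sub-tableaux of $s_i(\phi(w))$ and of $\phi(s_{\ell(\alpha)-i}(w))$ have equal column reading words and all other strips coincide, the two tableaux are equal. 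I expect the main obstacle to be the two-row base case: carefully matching the pairing rule in the crystal reflection (pair each $i+1$ with the nearest following unpaired $i$, then flip unpaired letters) against the jeu-de-taquin slides defining $\iota$, including correctly tracking the degenerate subcase $\beta_1 < \beta_2$ where the ambient shape is a rectangle minus a rectangle rather than a staircase-type two-column shape. Everything else — the identification $\cgw{\phi(w)} = w$, the index reversal, and the "untouched strips" bookkeeping — is routine.
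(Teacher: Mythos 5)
Your overall architecture matches the paper's: reduce to two adjacent maximal column words (equivalently, to the two horizontal strips of $\phi(w)$ labelled $i$ and $i+1$), prove the two-part case $s_1(\phi(w))=\phi(\iota(w))$, and reassemble using the fact that both operations leave the other strips and column words untouched. The reassembly and the index reversal $i\mapsto\ell(\alpha)-i$ are handled correctly. But the base case, which you yourself flag as ``the main obstacle,'' is where all the content of the lemma lives, and you do not prove it: you attribute it to a ``standard fact'' that $s_1$ on two-row skew shapes is realized by jeu-de-taquin into the complementary shape. No such quotable statement applies off the shelf, because $\iota$ is defined by jeu-de-taquin on a two-\emph{column} tableau living in an auxiliary rectangle, whereas $s_1$ acts on the reading word of the two-strip tableau $\phi(w)$ inside $\lambda/\mu$; the bridge between these two pictures is the column-growth-word correspondence itself (an RSK-dual, transposing operation), and checking that it intertwines $\iota$ with $s_1$ is precisely what must be done. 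The paper carries this out by realizing $\iota$ through Schensted column insertion of $a_p,\dots,a_1$ into the column $w^{(2)}=b_1\cdots b_q$ and verifying that the entry $b_{{\sf m}(i)}$ bumped by $a_i$ is exactly the $1$ paired with the $2$ coming from $a_i$ under the crystal pairing rule, so the unpaired $1$'s (those flipped to $2$'s) occupy exactly the columns $b_j$ that survive into the first column of $\iota(w)$. Some explicit matching of this kind is unavoidable, and your proposal stops just short of it.

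A further error in your setup: $\crw{\phi(w)}$ is \emph{not} Knuth-equivalent to $\cgw{\phi(w)}$; they are words in different alphabets (the former in $\{1,\dots,\ell(\alpha)\}$, the latter in column indices), and the correct relationship between them is the biword/RSK duality exploited in Lemma~\ref{lem: rect stan equaling Q}. Likewise, the column \emph{reading} word of the two-strip subtableau is a word in $\{i,i+1\}$, not $w^{(\ell(\alpha)-i)}w^{(\ell(\alpha)-i+1)}$; the latter is its column \emph{growth} word. The only identity you need, and the one that is true, is $\cgw{\phi(w)}=w$. This conflation does not by itself derail the strategy, but any attempt to transport the crystal reflection from $\crw{\phi(w)}$ to $w$ via Knuth equivalence would fail.
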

\begin{proof}
  Without loss of generality, we may assume that $\alpha$ has two parts.
  Suppose $\alpha=(p,q)\vDash n$
  Assume $w=w^{(1)}w^{(2)}$ where $|w^{(1)}|=p$ and $|w^{(2)}|=q$.
  We would like to establish that $s_1(\phi(w))=\phi(\iota(w))$.

  Assume $p<q$.
  Let $w^{(1)}=a_1\dots a_{p}$ and $w^{(2)}=b_1\dots b_{q}$.
  We have $a_1>\cdots > a_{p}$ and $b_1 > \cdots > b_{q}$.
  As $w$ is frank, it is the column reading word of a Young tableau of skew shape $\rot{(q,p)^{t}}$.
  Thus, $a_i\leq b_i$ for $1\leq i\leq p$.

Instead of computing $\iota(w)$ by way of rectifying the appropriate two-columned tableau, one may perform successive Schensted column insertions of the numbers $a_p$ down to $a_1$ starting from the single-columned tableau with column word $w^{(2)}$. See Figure~\ref{fig:iota via column insertion} for an example. Compare with Figure~\ref{fig:jdt on two columns} which established the same fact using jeu-de-taquin.
  \begin{figure}[h]
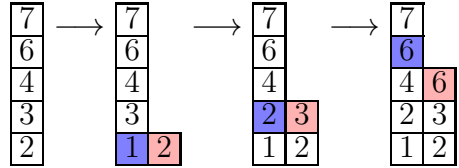

    \ytableausetup{mathmode,boxsize=1em}
    \begin{ytableau}
      7\\6\\4\\3\\2
    \end{ytableau}
    $\longrightarrow $
    \begin{ytableau}
      7\\6\\4\\3\\*(blue!50)1 & *(red!30)2
    \end{ytableau}
    $ \longrightarrow$
    \begin{ytableau}
      7\\6\\4\\*(blue!50)2 & *(red!30)3\\1 & 2
    \end{ytableau}
    $ \longrightarrow$
    \begin{ytableau}
      7\\*(blue!50)6\\4 & *(red!30)6\\2 & 3\\1 & 2
    \end{ytableau}
    \caption{Establishing that $\iota(\framebox{621} \hspace{1mm} \framebox{76432})=\framebox{76421}\hspace{1mm}\framebox{632}$ by column-inserting 1,2, and 6 into the single-columned tableau with column reading word $76432$. Blue boxes show the entries being inserted whereas  red boxes contain the entries  bumped.}
    \label{fig:iota via column insertion}
  \end{figure}

  During each intermediate step of this column-insertion procedure, the number $a_i$ being inserted into the current tableau bumps a distinct element from $\{b_1,\dots, b_q\}$.
  Furthermore this bumped entry is guaranteed to be strictly greater than the entries in the second column in the current tableau.
  Therefore, the insertion tableau is completely determined by the entries that get bumped.
  More precisely, for $i$ from $p$ down to $1$, define the integer ${\sf m}(i)$ recursively as follows.
  We define ${\sf m}(p)$ to be the largest integer $j$ such that $a_p\leq b_{j}$.
  Subsequently, for $i=p-1,\dots,1$, define ${\sf m}(i)$ to be the largest integer $j$ such that $j<{\sf m}(i+1) $ and $a_i\leq b_{j}$.
  Observe that in our Schensted column-insertion procedure, the entry $a_i$ bumps $b_{{\sf m}(i)}$.
  Therefore, the set of entries that get bumped is $\{b_{{\sf m}(i)}\suchthat 1\leq i\leq p\}$.
  For the example in Figure~\ref{fig:iota via column insertion}, we have ${\sf m}(3)=5$, ${\sf m}(2)=4$ and ${\sf m}(1)=2$. Therefore the set of entries that get bumped is $\{b_5,b_4,b_2\}=\{2,3,6\}$.

Consider $\crw{\phi(w)}=u_1\dots u_n$.
The word $v\coloneqq v_1\dots v_n$ obtained by recording the column to which each $u_i$ belongs gives us the weakly increasing arrangement of letters in $w$.
Furthermore, for $1\leq i\leq p$ (respectively $1\leq i\leq q$) the letter in $v$ corresponding to the $i$th $2$ (respectively 1) from the left in $\crw{\phi(w)}$ is equal to $a_i$ (respectively $b_i$).
Recall that the crystal reflection operator $s_1$ acting on $\phi(w)$ begins by pairing each $2$ in $\crw{\phi(w)}$ to the closest unpaired $1$ to its right.
Equivalently, in our current context, a $2$ corresponding to $a_i$ for some $1\leq i\leq p$ gets paired with the $1$ in  $\crw{\phi(w)}$ corresponding to  $b_{{\sf m}(i)}$.
We infer that the unpaired $1$s in $\crw{\phi(w)}$ correspond to those $b_j$ that are not bumped.
 These are precisely the $b_j$ that determine which $1$s in $\phi(w)$ turn into $2$s in computing $s_1(\phi(w))$.
Thus we infer that $s_1(\phi(w))=\phi(\iota(w))$.
This establishes the claim in the case $p<q$.
The case $p\geq q$ is similar and left to the reader.
\end{proof}
To illustrate the ideas in the preceding proof, Figure~\ref{fig:crystal action on phi of w} depicts the action of $s_1$ on the tableau $\phi(w)$ from Figure~\ref{fig:phi of w}, where $w= \framebox{621}\hspace{1mm}\framebox{76432}$.
From Figure~\ref{fig:iota via column insertion}, we see that the entries that do not get bumped  are $\{4,7\}$.
Also, note that $\crw{\phi(w)}=2211\textcolor{red}{1}21\textcolor{red}{1}$ where the unpaired $1$s are highlighted. In terms of the tableau $\phi(w)$, we see that the unpaired $1$s belong to columns $4$ and $7$. The tableau on the right in Figure~\ref{fig:crystal action on phi of w} is easily verified to be $\phi(\iota(w))$ as $\iota(w)=\framebox{76421}\hspace{1mm}\framebox{632}$.
\begin{figure}[h]
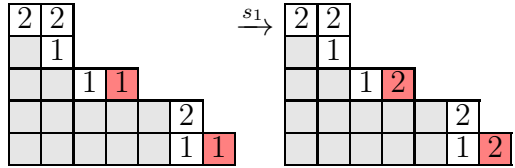

  \ytableausetup{mathmode,boxsize=1em}
  \begin{ytableau}
    2 & 2\\
    *(gray!20) & 1\\
    *(gray!20) & *(gray!20) & 1 & *(red!50)1\\
    *(gray!20) & *(gray!20) & *(gray!20) & *(gray!20) &*(gray!20) & 2\\
    *(gray!20) & *(gray!20) & *(gray!20) & *(gray!20) &*(gray!20) & 1 & *(red!50)1
  \end{ytableau}$ \xrightarrow[]{s_1}$
  \begin{ytableau}
     2 & 2\\
     *(gray!20) & 1\\
     *(gray!20) & *(gray!20) & 1 & *(red!50)2\\
     *(gray!20) & *(gray!20) & *(gray!20) & *(gray!20) &*(gray!20) & 2\\
     *(gray!20) & *(gray!20) & *(gray!20) & *(gray!20) &*(gray!20) & 1 & *(red!50)2
   \end{ytableau}
   \caption{Crystal reflection on $\phi(w)$ and its relation to the $\iota$ involution.}
   \label{fig:crystal action on phi of w}
 \end{figure}

We are ready to give a precise relation between  $\LRT^{\sigma}(\lambda,\mu,\nu)$ and $\lambda/\mu$-compatible frank words with a certain column form that generalizes Lemma~\ref{lem:LR tableaux and LRfranktab}.
\begin{proposition}\label{prop:sigma-LR and frank}
  Let $\lambda,\mu$ and $\nu$ be partitions such that $\mu\subseteq \lambda$ and $|\nu|=|\lambda/\mu|$.
  Let $w_0$ be the longest word in $\sgrp{\ell(\nu)}$.
  For $\sigma \in \sgrp{\ell(\nu)}$, we have
  \[
  	\LRT^{\sigma}(\lambda,\mu,\nu)=\lrfranktab{\lambda,\mu,(w_0\sigma w_0) \cdot \reverse{\nu}}.
  \]
\end{proposition}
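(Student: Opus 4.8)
The plan is to combine Lemma~\ref{lem:LR tableaux and LRfranktab} (the $\sigma = \mathrm{id}$ case) with Lemma~\ref{lem:commuting action}, which translates the crystal reflection action on $\lrfranktab{\lambda,\mu,\alpha}$ into the Lascoux--Sch\"utzenberger symmetric group action on frank words, but with the index $i$ replaced by $\ell(\alpha)-i$. The first step is to record the bookkeeping: if $T \in \LRT(\lambda,\mu,\nu)$, then by Lemma~\ref{lem:LR tableaux and LRfranktab} we have $T = \phi(w)$ for a unique $w \in \lrfrank{\lambda,\mu,\reverse{\nu}}$, and $\crw{T}$ (equivalently $\cgw{T}$) is $w$ itself viewed as a word. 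So $\LRT^{\sigma}(\lambda,\mu,\nu) = \{\,\sigma(\phi(w)) \suchthat w \in \lrfrank{\lambda,\mu,\reverse{\nu}}\,\}$, and the task is to identify the set $\{\sigma(\phi(w))\}$ with $\lrfranktab{\lambda,\mu,(w_0\sigma w_0)\cdot\reverse{\nu}}$.

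The key step is an iterated application of Lemma~\ref{lem:commuting action}. That lemma says $s_i \circ \phi = \phi \circ s_{\ell(\alpha)-i}$ on $\lrfrank{\lambda,\mu,\alpha}$; note $\ell(\alpha) = \ell(\reverse{\nu}) = \ell(\nu)$, and this relation is exactly conjugation by the longest element $w_0 \in \sgrp{\ell(\nu)}$, since $w_0 s_i w_0 = s_{\ell(\nu)-i}$. Both $s_i$ (acting on tableaux) and $s_j$ (acting on frank words) satisfy the Coxeter relations of $\sgrp{\ell(\nu)}$ — for tableaux this is the Lascoux--Sch\"utzenberger result quoted in the excerpt, and for frank words it is the action built in Section~\ref{subsec:symmetric group action frank words}. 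Hence, writing a reduced word $\sigma = s_{i_1}\cdots s_{i_k}$, one gets
\[
\sigma(\phi(w)) = s_{i_1}\cdots s_{i_k}(\phi(w)) = \phi\bigl(s_{\ell(\nu)-i_1}\cdots s_{\ell(\nu)-i_k}(w)\bigr) = \phi\bigl((w_0 \sigma w_0)(w)\bigr),
\]
where the rightmost equality uses that $s_{\ell(\nu)-i_1}\cdots s_{\ell(\nu)-i_k}$ is a (reduced) word for $w_0\sigma w_0$, so the frank-word action is well defined and independent of the choice. One should note here that each intermediate $s_{i_j}\cdots s_{i_k}(w)$ still lies in some $\lrfrank{\lambda,\mu,\alpha'}$ with $\sort(\alpha') = \sort(\reverse{\nu}) = \nu^t$'s underlying partition, so Lemma~\ref{lem:commuting action} applies at each stage.

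Finally, the column form must be tracked. The frank-word action of $s_i$ on a word of column form $\beta$ produces a word of column form $s_i \cdot \beta$ (this is exactly how $\iota$ was set up: it swaps the sizes of the $i$-th and $(i+1)$-st columns). Therefore the frank-word action of $w_0\sigma w_0$ sends $\lrfrank{\lambda,\mu,\reverse{\nu}}$ bijectively onto $\lrfrank{\lambda,\mu,(w_0\sigma w_0)\cdot\reverse{\nu}}$, and applying $\phi$ and using the display above gives $\LRT^{\sigma}(\lambda,\mu,\nu) = \phi\bigl(\lrfrank{\lambda,\mu,(w_0\sigma w_0)\cdot\reverse{\nu}}\bigr) = \lrfranktab{\lambda,\mu,(w_0\sigma w_0)\cdot\reverse{\nu}}$, as claimed. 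The main obstacle — really the only non-formal point — is making sure that reducing $\sigma$ to a word of generators $s_{i_j}$ and then uniformly substituting $s_{\ell(\nu)-i_j}$ is legitimate, i.e.\ that both group actions are genuinely $\sgrp{\ell(\nu)}$-actions so that the composite depends only on $\sigma$ and not on the reduced word; this rests on the relations quoted for crystal operators and on the construction in Section~\ref{subsec:symmetric group action frank words}, together with the fact that $c \mapsto w_0 c w_0$ is an automorphism of $\sgrp{\ell(\nu)}$ carrying reduced words to reduced words. Everything else is the routine verification that $\phi$ interacts correctly with column forms and stays inside the relevant $\lrfrank{\lambda,\mu,\cdot}$ sets at each step.
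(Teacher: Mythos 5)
Your proposal is correct and follows essentially the same route as the paper: reduce to the $\sigma=\mathrm{id}$ case via Lemma~\ref{lem:LR tableaux and LRfranktab}, iterate Lemma~\ref{lem:commuting action} over a reduced word using $w_0 s_i w_0 = s_{\ell(\nu)-i}$ and the fact that both actions satisfy the Coxeter relations, and track the column form to land in $\lrfrank{\lambda,\mu,(w_0\sigma w_0)\cdot\reverse{\nu}}$. The only cosmetic differences are that the paper closes the surjectivity direction with a cardinality count rather than your direct bijectivity argument, and your parenthetical identification of $\crw{T}$ with $\cgw{T}$ is a harmless slip (only $\cgw{T}=w$ is used).
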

\begin{proof}
 Lemma~\ref{lem:LR tableaux and LRfranktab} implies $\LRT(\lambda,\mu,\nu)=\lrfranktab{\lambda,\mu,\reverse{\nu}}$.
  Consider $T\in \LRT^{\sigma}(\lambda,\mu,\nu)$.
 We must have $T=\sigma(T')$ for a unique $T'\in \LRT(\lambda,\mu,\nu)$, which in turn implies that $T=\sigma(\phi(w'))$ for a unique $w'\in \lrfrank{\lambda,\mu,\reverse{\nu}}$.
  Lemma~\ref{lem:commuting action} implies that $T=\phi(w_0\sigma w_0(w'))$.
  As $\colform{w_0\sigma w_0(w')}=(w_0\sigma w_0) \cdot\reverse{\nu}$, we conclude that
  \begin{align}
  \LRT^{\sigma}(\lambda,\mu,\nu)\subseteq\lrfranktab{\lambda,\mu,(w_0\sigma w_0) \cdot \nu^r}.
  \end{align}
  A simple cardinality count implies this inclusion must be an equality.
\end{proof}
Proposition~\ref{prop:sigma-LR and frank} states that elements of $\LRT^{\sigma}(\lambda,\mu,\nu)$ are in bijection with $\lambda/\mu$-compatible frank words with column form $w_0\sigma w_0\cdot \reverse{\nu}$.
As an example, consider $\LRT^{\sigma}(\lambda,\,\mu,\,\nu)$ from Figure~\ref{fig:crystal action on LR tableaux} where $\lambda=(7,6,4,3,2)$, $\mu=(6,4,4)$, $\nu=(4,3,1)$ and $\sigma=s_1s_2$.
Note that the column growth words of these tableaux are indeed frank words with column form $(w_0 s_1s_2 w_0)\cdot (1,3,4)=(s_2s_1)\cdot(1,3,4)=(3,4,1)$. Figure~\ref{fig:cgw of sigma-LR tableaux} shows these frank words as column reading words of tableaux with column lengths $3$, $4$ and $1$ read from left to right.
We encourage the reader to obtain these tableaux by performing jeu-de-taquin slides to  tableaux in Figure~\ref{fig:cgw of LR tableaux}.
\begin{figure}[ht]
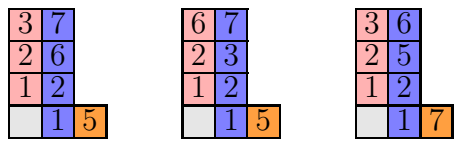

$$
\ytableausetup{mathmode,boxsize=1em}
\begin{ytableau}
*(red!30)3& *(blue!50)7 \\
*(red!30)2 & *(blue!50)6 \\
*(red!30)1 & *(blue!50)2 \\
*(gray!20) & *(blue!50)1 &  *(orange!75)5
\end{ytableau}
\hspace{10mm}
\begin{ytableau}
*(red!30)6 & *(blue!50)7 \\
*(red!30)2 & *(blue!50)3 \\
*(red!30)1 & *(blue!50)2 \\
*(gray!20) & *(blue!50)1 &  *(orange!75)5
\end{ytableau}
\hspace{10mm}
\begin{ytableau}
*(red!30)3 & *(blue!50)6 \\
*(red!30)2 & *(blue!50)5 \\
*(red!30)1 & *(blue!50)2 \\
*(gray!20) & *(blue!50)1 &  *(orange!75)7
\end{ytableau}
$$
\caption{Column growth words of tableaux in Figure~\ref{fig:crystal action on LR tableaux} are frank words.}
\label{fig:cgw of sigma-LR tableaux}
\end{figure}

\section{Noncommutative LR coefficients and frank words}\label{sec:back to CT}
Now that we understand tableaux in $\LRT^{\sigma}(\lambda,\mu,\nu)$ as certain $\lambda/\mu$-compatible words with a prescribed column form, we are ready to establish the connection to noncommutative LR coefficients.
We need some preliminary lemmas.
\begin{lemma}\label{lem: rect stan equaling Q}
  For any Young tableau $T$, we have $\rect{\stan{T}}=\evac(\qtab(\cgw{T}))^{t}$.
\end{lemma}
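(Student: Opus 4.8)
The plan is to reduce the statement to the well-known interaction between column growth words, Schensted insertion, and evacuation. Recall that $\cgw{T} = \cgw{\stan{T}}$ by definition, so without loss of generality I may assume $T$ is standard, say with $n$ boxes, and prove $\rect{T} = \evac(\qtab(\cgw{T}))^t$. The first step is to unwind the definition of $\cgw{T}$: it is the word obtained by reading the entries of $\stan{T}$ from largest ($n$) down to smallest ($1$) and recording the \emph{column index} of each. In other words, $\cgw{T}$ is the reverse of the sequence that, in position $k$, records the column of the box labeled $k$; equivalently, $\cgw{T}$ read backwards is the ``column-of-$k$'' sequence $c(1)c(2)\cdots c(n)$.

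Second, I would invoke the classical fact (see the standard references cited in the background section, e.g. \cite{sagan,stanley-ec2,fulton}) that for a standard skew tableau $T$ of shape $\lambda/\mu$, performing jeu-de-taquin to rectify $T$ and then recording the column indices of the rectified entries is governed by the recording tableau of $\crw{T}$ under column insertion; more precisely, the key identity is that $\qtab(\cgw{T})$ — the recording tableau built by reading the standardized entries from $n$ down to $1$ and inserting their columns — is jeu-de-taquin invariant and its evacuation (transposed, to pass between row- and column-reading conventions) recovers $\rect{T}$. Concretely, I would show:
\begin{itemize}
\item[(i)] $\ptab(\crw{\stan{T}})$ depends only on the jdt-equivalence class of $T$, hence equals $\rect{\stan{T}}$ after applying $\rho^{-1}\circ\rho = \mathrm{id}$ (here the rectification is literally the jeu-de-taquin straightening of $\stan T$);
\item[(ii)] column-inserting the word $\crw{\stan{T}}$ produces a recording tableau whose transpose-evacuation matches the insertion tableau of the ``column word'' $\cgw{T}$, by the symmetry of RSK under transposing shapes when one reads columns rather than rows;
\item[(iii)] combining (i) and (ii), together with the standard identity $\ptab(w) = \evac(\qtab(\reverse{w}))$-type relation (Schützenberger's theorem on evacuation and reversal), yields $\rect{\stan{T}} = \evac(\qtab(\cgw{T}))^t$.
\end{itemize}

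The main obstacle is bookkeeping the conventions: the paper reads columns top-to-bottom in $\crw{T}$, but bottom-to-top in $\crw{\tau}$ for composition tableaux, and $\cgw{T}$ reads standardized entries from \emph{largest to smallest} while recording \emph{columns}, so there is an interplay of three ``reversals'' — reversing the reading order of entries, transposing the shape (rows $\leftrightarrow$ columns), and the evacuation anti-automorphism. The clean way to handle this is to set up a single explicit bijection: given $\stan{T}$, write down the biword whose top row is $n,n-1,\dots,1$ and whose bottom row is $c(n),\dots,c(1)$ (the columns), observe that RSK on this biword (with column insertion on the bottom) has insertion tableau equal to $\ptab(\cgw T)$ and recording tableau whose shape is $\lambda^t$; then the Schützenberger/Gansner result that transposing the biword transposes both tableaux and converts insertion into the dual, combined with the fact that reversing $1\cdots n$ to $n\cdots 1$ in the recording positions applies evacuation, gives exactly the asserted formula. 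I expect steps (i) and (iii) to be short citations, while (ii) — the precise matching of $\evac$ and transpose with the column-reading conventions — is where I would spend most of the care, likely presented as a direct verification on the standardization rather than an appeal to a black box.
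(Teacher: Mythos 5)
Your proposal is correct and follows essentially the same route as the paper: both arguments form the biword with top row $n,n-1,\dots,1$ and bottom row $\cgw{T}$, use the RSK symmetry between insertion and recording tableaux of the two sortings (identifying $\ptab(\crw{\stan{T}})=\rect{\stan{T}}$ with $\qtab(\reverse{\cgw{T}})$), and finish with Sch\"utzenberger's reversal--evacuation--transpose identity. The convention bookkeeping you flag is exactly the content the paper also leaves as short citations to \cite{Lothaire} and \cite{stanley-ec2}.
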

\begin{proof}
  We sketch the proof and follow the exposition in \cite{Lothaire}.
  Let $T'=\stan{T}$ and suppose that $T$ has $n$ boxes.
  Consider the biword $\left[\begin{array}{c}u \\ v\end{array}\right]$ where $u\coloneqq u_1\cdots u_n$ is the longest word in $\mathfrak{S}_n$ and $v\coloneqq v_1\cdots v_n$ is obtained by recording the column in $T'$ to which $u_i$ belongs.
In other words, $v=\cgw{T'}=\cgw{T}$.
 Consider  biwords $\left[\begin{array}{c}u' \\ v'\end{array}\right]$ and $\left[\begin{array}{c}u'' \\ v''\end{array}\right]$, where $u'$ (respectively $v''$) in the weakly increasing rearrangement of $u$ (respectively $v$) and $v'$ (respectively $u''$) is the rearrangement induced by the aforementioned sorting.
 Then we have that
\[
v'= \reverse{\cgw{T'}} \text{ and }
u''= \crw{T'}.
\]
Note that $\rect{\stan{T}}=\rect{T'}=\ptab(\crw{T'})=\ptab(u'')$. By \cite[Proposition 5.3.9]{Lothaire}, this equals $\qtab(v')$ and by \cite[Corollary A1.2.11]{stanley-ec2} which relates reversal to evacuation, the claim now follows.
\end{proof}
 For the leftmost tableau $T$ in Figure~\ref{fig:crystal action on LR tableaux}, its standardization rectifies to the tableau in the middle in Figure~\ref{fig:rect-stan via evacuation}.
We have $\cgw{T}=\framebox{321}\hspace{1mm} \framebox{7621} \hspace{1mm} \framebox{5}$, and $\qtab(\cgw{T})$ is shown on the right.
We invite the reader to verify that $\evac(\qtab(\cgw{T}))$ is indeed the tableau in the middle upon transposing.
\begin{figure}[h]
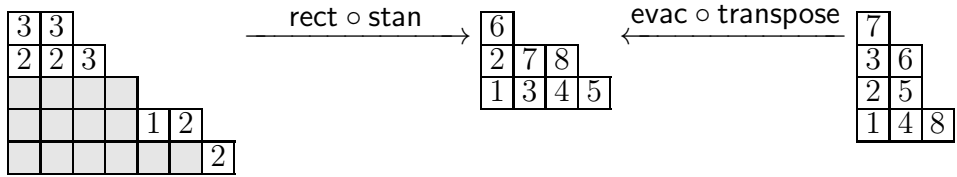

$$
\ytableausetup{mathmode,boxsize=1em}
\begin{ytableau}
3 & 3\\
2 &2 &3\\
*(gray!20) &*(gray!20) & *(gray!20) &*(gray!20)\\
*(gray!20) &*(gray!20) &*(gray!20) &*(gray!20) & 1 & 2\\
*(gray!20) &*(gray!20) &*(gray!20) &*(gray!20) &*(gray!20) &*(gray!20) & 2\\
\end{ytableau}
\xrightarrow{\makebox[2.8cm]{$\mathsf{rect}\circ \mathsf{stan}$}}
\begin{ytableau}
6 \\
2 & 7 & 8\\
1 & 3 & 4 & 5
\end{ytableau}
\xleftarrow{\makebox[2.8cm]{$\mathsf{evac}\circ \mathsf{transpose}$}}
\begin{ytableau}
7\\
3 & 6\\
2 & 5\\
1 & 4 &8
\end{ytableau}
$$
\caption{A demonstration of the claim $\rect{\stan{T}}=\evac(\qtab(\cgw{T}))^{t}$.}
\label{fig:rect-stan via evacuation}
\end{figure}

\begin{lemma}\label{lem:unique tableau  with shape=descent composition}
Let  $\alpha$ be a composition.
There is a unique Young tableau $T$ of shape $\sort(\alpha)$ and descent composition $\alpha$.
Furthermore, this tableau satisfies $\rho^{-1}(T)=\tau_{\alpha}$.
\end{lemma}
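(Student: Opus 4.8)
The statement has two parts: (i) existence and uniqueness of a Young tableau $T$ of shape $\sort(\alpha)$ with descent composition $\alpha$, and (ii) the identity $\rho^{-1}(T) = \tau_\alpha$. For part (i), I would argue via the RSK/Schensted correspondence between standard Young tableaux and the descent statistic. Recall that for a standard Young tableau $Q$ of a straight shape $\lambda$, the descent composition is $\comp(\des(Q))$, and that among all SYT of shape $\lambda$, the descent set behaves well under evacuation and transpose. The cleanest route: the number of SYT of shape $\lambda = \sort(\alpha)$ with descent composition exactly $\alpha$ equals the coefficient obtained by pairing the Schur function $s_\lambda$ (in its fundamental quasisymmetric expansion $s_\lambda = \sum_{Q} F_{\comp(\des(Q))}$) against a suitable object — but more elementarily, I would exhibit $T$ explicitly and then prove no other works.

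\textbf{Explicit construction and uniqueness.} Write $\alpha = (\alpha_1,\dots,\alpha_\ell)$ and $\lambda = \sort(\alpha)$. The natural candidate $T$ is built by the following greedy column-filling: place $1, 2, \dots, \alpha_1$ so that $\alpha_1$ sits at the top of a column (forcing a descent at $\alpha_1$), i.e. fill the first $\alpha_1$ entries down the first column as far as the shape allows and then continue, always starting each new "block" of size $\alpha_i$ in a position strictly higher than where the previous block ended. Concretely, I expect $T$ is precisely the tableau whose standardization-inverse recovers $\tau_\alpha$, so it is cleanest to \emph{define} $T \coloneqq \rho(\tau_\alpha)$ (note $\tau_\alpha$ is a standard composition tableau of straight shape $\alpha$, and $\rho = \rho_\varnothing$ sends it to a standard Young tableau of shape $\sort(\alpha)$ by sorting columns). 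Then part (ii) becomes the tautology $\rho^{-1}(T) = \rho^{-1}(\rho(\tau_\alpha)) = \tau_\alpha$, granted $\rho$ is a bijection $\CT(-\cskew\varnothing)\to\YT(-/\varnothing)$ on the relevant shapes (which it is, as the inverse map is described in the excerpt). So the real content is: (a) $T = \rho(\tau_\alpha)$ has descent composition $\alpha$, and (b) $T$ is the \emph{unique} SYT of shape $\lambda$ with that property.

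\textbf{Carrying it out.} For (a): the entries $1+\sum_{j<i}\alpha_j$ through $\sum_{j\le i}\alpha_j$ occupy row $i$ of $\tau_\alpha$ consecutively left to right; after applying $\rho$ (sorting each column upward and left-justifying on $\varnothing$), I would track where consecutive integers $k, k+1$ land and check directly that $k$ is a descent of $T$ iff $k \in \set(\alpha) = \{\alpha_1, \alpha_1+\alpha_2, \dots\}$. The key combinatorial fact is that within a single row-block of $\tau_\alpha$ the entries are consecutive and the $\rho$-map, which reorders only columns, keeps each such block weakly "descending in columns" in a way that produces no internal descent, while the jump between block $i$ and block $i+1$ always creates a descent because the first entry of block $i+1$ is placed in a strictly higher row than the last entry of block $i$ (a property one reads off from how $\rho$ bottom-justifies columns). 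For (b), uniqueness: suppose $T'$ is any SYT of shape $\lambda$ with descent composition $\alpha$. I would induct on $\ell(\alpha)$, or equivalently on the size: the entries $1,\dots,\alpha_1$ must form a horizontal-strip-free "staircase start" — since there is no descent below $\alpha_1$, the entries $1,\dots,\alpha_1$ read in increasing order must go strictly up in rows at each step, hence occupy the first $\alpha_1$ cells of the first column (this is the only way $\alpha_1 - 1$ consecutive non-descents can be realized given shape $\lambda$ — any deviation would put $k+1$ in a weakly lower row than $k$, creating a non-descent-violating... actually a descent-absence is forced, so weakly-lower is required, and strict increase in columns forces the column); then the remaining entries form an SYT of the shape $\lambda$ minus that column-segment with descent composition $(\alpha_2,\dots,\alpha_\ell)$, and induction finishes.

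\textbf{Main obstacle.} I expect the genuine difficulty is part (b), the uniqueness, specifically the base-step claim that "no descent among $1,\dots,\alpha_1$ forces these entries down the first column of $\lambda$." This is true but needs care: one must use that $\lambda$ is exactly $\sort(\alpha)$ (not an arbitrary shape) — e.g. the first column of $\lambda$ has length $\ell(\alpha) \ge$ the number of parts, and the multiset of parts controls how long an initial "all non-descent" run can be forced into a single column before the shape runs out. The cleanest phrasing of the induction will require stating precisely the recursive shape $\lambda \setminus (\text{first } \alpha_1 \text{ cells of column } 1)$ equals $\sort(\alpha_2,\dots,\alpha_\ell)$, which is a small partition-combinatorics lemma (removing $\alpha_1$ from the first column of $\sort(\alpha)$ and re-sorting). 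I would isolate that as a one-line observation. Everything else — that $\rho$ is a bijection, that $\tau_\alpha$ is standard of straight shape, that descent sets transport correctly — is routine given the machinery already set up in the excerpt.
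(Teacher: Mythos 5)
Your overall architecture is close in spirit to the paper's proof (the paper also reads off a forced piece of $T$ and then obtains $\rho^{-1}(T)=\tau_{\alpha}$ by applying $\rho$; defining $T\coloneqq\rho(\tau_{\alpha})$ up front so that part (ii) becomes a tautology is a harmless reorganization). The problem lies in the uniqueness step, which you correctly identify as the crux but then get wrong. With the paper's conventions (French, descent at $k$ iff $k+1$ lies in a row \emph{strictly above} $k$), the absence of descents at $1,\dots,\alpha_1-1$ forces each of $2,\dots,\alpha_1$ into a row weakly \emph{below} its predecessor; combined with column-strictness this puts $k+1$ in a column strictly to the \emph{right} of $k$. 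Since $1$ occupies the bottom-left corner, the entries $1,\dots,\alpha_1$ are forced into the bottom row, columns $1$ through $\alpha_1$ --- not ``the first $\alpha_1$ cells of the first column'' as you assert. (Two consecutive entries with no descent between them can never share a column: same column and larger entry means strictly higher row, which is exactly a descent.) Your parenthetical self-correction notices the ``weakly lower'' issue but still lands on the wrong conclusion, and the ensuing induction --- remove a segment of the first column of $\sort(\alpha)$ and ``re-sort'' --- is built on this false configuration. Even the repaired version (remove the bottom-row cells in columns $1,\dots,\alpha_1$) does not close the induction as stated: what remains is the skew shape $\sort(\alpha)/(\alpha_1)$, and the lemma being proved concerns straight shapes only; uniqueness of a standard filling with prescribed descent composition fails for general skew shapes, so the inductive hypothesis is not available.

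The paper's (admittedly terse) argument avoids this by forcing the tableau column by column rather than block by block: each maximal run of non-descents occupies at most one cell per column, there are $\ell(\alpha)$ such runs, and column $1$ of $\sort(\alpha)$ has exactly $\ell(\alpha)$ cells, so column $1$ must consist precisely of the block-initial entries $1,\ 1+\alpha_1,\ 1+\alpha_1+\alpha_2,\dots$; iterating the count over columns $1,\dots,c$ shows that block $i$ occupies exactly one cell in each of columns $1,\dots,\alpha_i$, and column-strictness then pins down every entry, giving both existence and uniqueness. If you want to keep a block-by-block induction you would need to formulate and prove a stronger statement valid for the intermediate skew shapes; the column-counting argument is the cleaner fix, and your part (a) (computing the descent composition of $\rho(\tau_{\alpha})$) is fine as sketched.
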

\begin{proof}
Observe that the first column of $T$ when read from bottom to top is forced to be $1, 1+\alpha_1,1+\alpha_1+\alpha_2,\dots, 1+\sum_{j=1}^{\ell(\alpha)-1}\alpha_j$.
The first claim follows easily from this observation, and our second claim follows from applying the map $\rho$ to the Young tableau $T$ obtained earlier.
We leave the details to the interested reader.
\end{proof}
We finally arrive at the key proposition that is utilized in the proof of our central result which  is Theorem~\ref{thm:main result 1}.
\begin{proposition}\label{prop:crystal action permutes parts}
If $T\in \LRT^{\sigma}(\lambda,\mu,\nu)$, then $\rect{\stan{T}}=\rho(\tau_{\alpha})$ where $\alpha=\sigma\cdot \nu$.
\end{proposition}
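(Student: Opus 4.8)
The plan is to pin down $\rect{\stan{T}}$ by its \emph{shape} and its \emph{descent composition}, and then invoke Lemma~\ref{lem:unique tableau with shape=descent composition}, which says $\rho(\tau_{\alpha})$ is the unique standard Young tableau of shape $\sort(\alpha)$ and descent composition $\alpha$. Since $\rect{\stan{T}}$ is automatically a standard straight–shape tableau and $\sort(\alpha)=\nu$ (because $\alpha=\sigma\cdot\nu$ rearranges the partition $\nu$), it suffices to show that $\rect{\stan{T}}$ has shape $\nu$ and descent set $\set(\alpha)$. By Proposition~\ref{prop:sigma-LR and frank} we may write $T=\phi(w)$ for a (unique) $\lambda/\mu$-compatible frank word $w$ with $\colform{w}=\gamma$, where $\gamma\coloneqq(w_0\sigma w_0)\cdot\reverse{\nu}$ and $w_0$ is the longest element of $\sgrp{\ell(\nu)}$. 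A preliminary observation is that $\cgw{T}=w$: by the construction of $\phi$, the boxes of $\phi(w)$ filled with $m+1-i$ lie in the columns recorded by the letters of $w^{(i)}$, one box per column, so after standardizing and reading from largest to smallest these contribute the strictly decreasing run $w^{(i)}$; maximality of each column word $w^{(i)}$ is precisely what forces the decreasing runs to break between consecutive blocks. Hence the maximal column factorization of $\cgw{T}$ is $w^{(1)}\cdots w^{(m)}$, i.e.\ $\cgw{T}=w$ and $\colform{\cgw{T}}=\gamma$.

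Next I would apply Lemma~\ref{lem: rect stan equaling Q}, which gives $\rect{\stan{T}}=\evac(\qtab(w))^{t}$, and then track shape and descents through the two involutions. For the shape: $w$ is frank, so $\ptab(w)$ and hence $\qtab(w)$ has shape $\sort(\colform{w})^{t}=\nu^{t}$; evacuation preserves shape and transposition sends $\nu^{t}$ to $\nu$, so $\rect{\stan{T}}$ has shape $\nu$, as needed. For the descent set, put $n\coloneqq|\nu|$. Since $\qtab(w)$ is a standard recording tableau, $\operatorname{Des}(\qtab(w))$ equals the descent set $\{i:w_i>w_{i+1}\}$ of the word $w$; and since $w$ is the concatenation $w^{(1)}\cdots w^{(m)}$ of its maximal column words, every position in $[n-1]$ is a descent of $w$ except the $m-1$ junctions, so $\operatorname{Des}(\qtab(w))=[n-1]\setminus\set(\gamma)$. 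Using the standard identities $\operatorname{Des}(\evac U)=\{\,n-i:i\in\operatorname{Des}(U)\,\}$ and $\operatorname{Des}(U^{t})=[n-1]\setminus\operatorname{Des}(U)$ (see \cite{stanley-ec2}), these compose to give $\operatorname{Des}(\rect{\stan{T}})=\{\,n-s:s\in\set(\gamma)\,\}=\set(\reverse{\gamma})$.

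Finally I would check $\reverse{\gamma}=\alpha$ by pure symmetric–group bookkeeping: the identity $\reverse{\eta\cdot\delta}=(w_0\eta w_0)\cdot\reverse{\delta}$ (conjugation by $w_0$ reverses one-line notation), applied twice together with $w_0^2=\mathrm{id}$ and $\reverse{(\reverse{\nu})}=\nu$, yields
\[
\reverse{\gamma}=\reverse{\,(w_0\sigma w_0)\cdot\reverse{\nu}\,}=\bigl(w_0(w_0\sigma w_0)w_0\bigr)\cdot\reverse{(\reverse{\nu})}=\sigma\cdot\nu=\alpha .
\]
Hence $\operatorname{Des}(\rect{\stan{T}})=\set(\alpha)$, so $\rect{\stan{T}}$ has shape $\nu=\sort(\alpha)$ and descent composition $\alpha$, and Lemma~\ref{lem:unique tableau with shape=descent composition} forces $\rect{\stan{T}}=\rho(\tau_{\alpha})$.

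The step I expect to need the most care is the middle one: proving cleanly that $\cgw{\phi(w)}=w$ — being precise about how standardization orders equal entries spread across a horizontal strip, and why maximality of each $w^{(i)}$ makes $\colform{\cgw{T}}=\gamma$ on the nose — and then shepherding the descent set through $\qtab$, evacuation, and transposition without dropping a complement or a reversal. A more computational alternative is to induct on $\ell(\sigma)$, handling one crystal reflection at a time via Lemma~\ref{lem:commuting action} with base case the rectification of an ordinary LR tableau; this works but appears messier than the $\qtab$/evacuation route above.
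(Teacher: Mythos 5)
Your proposal is correct and follows essentially the same route as the paper: express $\rect{\stan{T}}$ as $\evac(\qtab(\cgw{T}))^{t}$ via Lemma~\ref{lem: rect stan equaling Q}, use Proposition~\ref{prop:sigma-LR and frank} and frankness to get the shape $\nu$, track the descent set of the column growth word through $\qtab$, evacuation, and transposition to get $\set(\alpha)$, and finish with Lemma~\ref{lem:unique tableau with shape=descent composition}. The only additions are the explicit checks that $\cgw{\phi(w)}=w$ and that $\reverse{\gamma}=\alpha$, which the paper leaves implicit; these are correct bookkeeping, not a different argument.
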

\begin{proof}
Throughout, assume that $\sigma\cdot \nu=\alpha$ and set $n\coloneqq |\alpha|$.
By Lemma~\ref{lem: rect stan equaling Q} we have
\begin{align}\label{eqn:final_1}
\rect{\stan{T}}=\evac(\qtab(\cgw{T}))^{t}.
\end{align}
Since $T\in\LRT^{\sigma}(\lambda,\mu,\nu)$,  Proposition~\ref{prop:sigma-LR and frank} implies
\begin{align}
\cgw{T}\in \lrfrank{\lambda,\mu,\reverse{\alpha}}.
\end{align}
Therefore, the shape underlying $\qtab(\cgw{T})$  is $\sort(\alpha)^t=\nu^{t}$.
Using the preceding fact along with  \eqref{eqn:final_1}, we infer that $ \rect{\stan{T}}$ has shape $\nu$.

Note that the descent set of $\cgw{T}$ is $ \{n-i\suchthat i\in [n-1]\setminus\set(\alpha)\}$, which therefore is also the descent set of $\qtab(\cgw{T})$.
It follows that the descent set of $\evac(\qtab(\cgw{T}))$ is $ [n-1]\setminus\set(\alpha)$, which in turn implies that the descent set of $\rect{\stan{T}}=\evac(\qtab(\cgw{T}))^{t}$ is $\set(\alpha)$.
Thus we have established that $\rect{\stan{T}}$ has shape $\sort(\alpha)$ and descent composition $\alpha$.
Lemma~\ref{lem:unique tableau with shape=descent composition} proves the proposition.
\end{proof}

This also completes the proof of our main theorem.
A remarkable aspect of Proposition~\ref{prop:crystal action permutes parts} is that the symmetric group action on LR tableaux via crystal reflection operators translates to the usual permutation action on the parts of the shape underlying the rectification, after applying the $\rho$ map.

We briefly describe an equivalent interpretation that involves box-adding operators on compositions.
By Proposition~\ref{prop:sigma-LR and frank}, we know that elements of $\LRT^{\sigma}(\lambda,\mu,\nu)$ may be constructed by computing $\lambda/\mu$-compatible words $w$ satisfying $\colform{w}=(w_0\sigma w_0)\cdot \reverse{\nu}$, where $w_0$ is the longest word in $\sgrp{\ell(\nu)}$.
Equivalently, these words are exactly the column growth words of standardizations of  tableaux in $\LRT^{\sigma}(\lambda,\mu,\nu)$.
Therefore, they may be identified with certain saturated chains in Young's lattice from $\mu$ to $\lambda$.
By applying the map $\rho_{\beta}$ where $\sort(\beta)=\mu$, these chains may be interpreted as chains in $\Lc$ from $\beta$ to certain compositions $\gamma$ that satisfy $\sort(\gamma)=\lambda$.
Fixing a $\gamma$ and counting these chains allows us to compute $C_{\alpha\beta}^{\gamma}$ where $\alpha=\sigma\cdot \nu$.

We make the preceding discussion precise by phrasing the result in the language of box-adding operators on compositions introduced in \cite{Tewari-MN} following the seminal work of Fomin \cite{Fomin-Greene}.
Given $i\geq 1$ and a composition $\alpha$ that has at least one part equaling $i-1$, we define $\addt_i(\alpha)$ to be the unique composition  covering $\alpha$ in $\Lc$ where the new box occurs in the $i$-th column.
Given a word $w=w_1\dots w_n$, define  $\addt_{w}\coloneqq
\addt_{w_1}\cdots \addt_{w_n}$.
We have the following corollary.
\begin{corollary}\label{cor:box-adding and LR coefficients}
Let $\alpha,\,\beta$ and $\gamma$ be compositions such that $|\beta|+|\alpha|=|\gamma|$. Let $(\lambda,\mu,\nu)$ be the triple $(\sort(\gamma), \sort(\beta),\sort(\alpha))$.
If $\sigma\in \mathfrak{S}_{\ell(\nu)}$ satisfies $\alpha=\sigma\cdot \nu$, then
\[
C_{\alpha\beta}^{\gamma}=|\{w\in \lrfrank{\lambda,\mu,(w_0\sigma w_0)\cdot\reverse{\nu}} \suchthat \addt_w(\beta)=\gamma\}|.
\]
\end{corollary}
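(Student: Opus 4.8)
The plan is to deduce the corollary from Theorem~\ref{thm:main result 1} together with Proposition~\ref{prop:sigma-LR and frank}, by translating the outer‑shape condition $\sh(\rho_{\beta}^{-1}(T))=\gamma\cskew\beta$ into the box‑adding language. Write $\alpha=\sigma\cdot\nu$, set $N\coloneqq|\alpha|=|\lambda/\mu|$, and abbreviate $\delta\coloneqq(w_0\sigma w_0)\cdot\reverse{\nu}$. A short manipulation of the two $\sgrp{\ell(\nu)}$‑actions, using $\reverse{\theta}=w_0\cdot\theta$ for any sequence $\theta$ of length $\ell(\nu)$, shows $\delta=\reverse{\alpha}$; in particular $\phi(w)$ has content $\alpha$ for every $w\in\lrfrank{\lambda,\mu,\delta}$. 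By Proposition~\ref{prop:sigma-LR and frank} the map $w\mapsto\phi(w)$ is a bijection from $\lrfrank{\lambda,\mu,\delta}$ onto $\LRT^{\sigma}(\lambda,\mu,\nu)$, injectivity being immediate from the construction of $\phi$ since $\cgw{\phi(w)}=w$. Combining this with Theorem~\ref{thm:main result 1}, it suffices to prove that for every $w\in\lrfrank{\lambda,\mu,\delta}$ one has
\[
\sh(\rho_{\beta}^{-1}(\phi(w)))=\addt_{w}(\beta)\cskew\beta.
\]

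To establish this I would first pass to standardizations: as noted in the proof of Theorem~\ref{thm:main result 1}, $\sh(\rho_{\beta}^{-1}(T))=\sh(\rho_{\beta}^{-1}(\stan{T}))$ whenever $T$ has inner shape $\mu$, so it is enough to analyze $\stan{\phi(w)}$. Writing $w=w_1\cdots w_N=\cgw{\phi(w)}$, the defining property of the column growth word says that the box of $\stan{\phi(w)}$ carrying the entry $N{+}1{-}j$ lies in column $w_j$; equivalently, the saturated chain $\mu=\lambda^{(N)}\lessdot\cdots\lessdot\lambda^{(0)}=\lambda$ in Young's lattice recorded by $\stan{\phi(w)}$ adjoins boxes, read from $\mu$ up to $\lambda$, in the columns $w_N,w_{N-1},\dots,w_1$, that is, in the columns listed by $\reverse{w}$.

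The remaining ingredient is the compatibility of the generalized $\rho$ map with box‑adding. The map $\rho_{\beta}$ alters neither the multiset of column indices of the boxes of a tableau nor the property of being standard, and it sends $\CT(-\cskew\beta)$ to $\YT(-/\sort(\beta))$; consequently it carries a standard composition tableau built, starting from $\beta$, by a saturated chain in $\Lc$ whose successive boxes lie in columns $d_1,\dots,d_N$ to the standard Young tableau built, starting from $\sort(\beta)$, by adjoining boxes in the same columns $d_1,\dots,d_N$. At the level of shapes this is exactly $\sort\circ\addt_i=u_i\circ\sort$, where $u_i$ adjoins a box in column $i$ to a partition; it follows from the fact that each cover in $\Lc$ adjoins a single box in a well‑defined column (see the definition of $\lessdot_c$) and is implicit in \cite{Tewari-MN}. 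Reading the chain of the previous paragraph through $\rho_{\beta}^{-1}$ via this compatibility, the standard composition tableau $\rho_{\beta}^{-1}(\stan{\phi(w)})$ is built from $\beta$ by adjoining boxes in columns $w_N,w_{N-1},\dots,w_1$, hence its outer shape is $\addt_{w_1}(\addt_{w_2}(\cdots\addt_{w_N}(\beta)\cdots))=\addt_{w}(\beta)$. This yields the displayed identity, and therefore $C_{\alpha\beta}^{\gamma}=|\{w\in\lrfrank{\lambda,\mu,\delta}\suchthat\addt_{w}(\beta)=\gamma\}|$, as claimed.

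The step I expect to be the main obstacle is the last one: making the $\rho_{\beta}$/$\addt$ compatibility precise, in particular matching the topmost‑part convention built into $\addt_i$ with the column‑preserving but row‑reshuffling behaviour of $\rho_{\beta}$, and keeping straight the reversal relating the column growth word (read from the largest entry downward) to the bottom‑up saturated chain. Once that is in hand, everything else is routine bookkeeping with the definitions of $\phi$, $\lrfrank{-}$, and the two symmetric group actions from Section~\ref{sec:Frank words}.
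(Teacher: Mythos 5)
Your proposal is correct and follows essentially the same route as the paper, which only sketches this corollary: identify $\LRT^{\sigma}(\lambda,\mu,\nu)$ with $\lrfrank{\lambda,\mu,(w_0\sigma w_0)\cdot\reverse{\nu}}$ via Proposition~\ref{prop:sigma-LR and frank}, read each frank word as a saturated chain in Young's lattice, and transport it through $\rho_{\beta}$ to a chain in $\Lc$ recorded by the box-adding operators, then invoke Theorem~\ref{thm:main result 1}. Your write-up supplies more detail than the paper does (in particular the check that $(w_0\sigma w_0)\cdot\reverse{\nu}=\reverse{\alpha}$ and the column-preservation argument for $\rho_{\beta}$), and the one step you flag as delicate is indeed the only place requiring care.
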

The cases in  Corollary~\ref{cor:box-adding and LR coefficients} where  $\sigma$ is either the identity permutation or the longest permutation in $\sgrp{\ell(\nu)}$ correspond to left and right LR rules of \cite{BTvW}.
We remark here that the two LR rules in \cite{BTvW} were proved by different approaches. Thus,  not only does our Corollary~\ref{cor:box-adding and LR coefficients} generalize these LR rules, it provides a uniform proof that works in all cases.

\section*{Acknowledgements}
We are very grateful to Sami Assaf for suggesting that we study our noncommutative LR coefficients from a crystal perspective and for numerous helpful discussions. We are also thankful to  Sara Billey and Steph van Willigenburg for various discussions.

\end{document}